\chardef\bslash=`\\ 
\newtheorem[{}\it]{thm}{Theorem}[section]
\newtheorem{lem}[thm]{Lemma}
\newtheorem{prop}[thm]{Proposition}
\theoremstyle{remark} 
\newtheorem{remark}[thm]{Remark}
\theoremstyle{definition}
\newtheorem{defn}{\textsc{Definition}}[section]
\newtheorem{rem}{Remark}[section]
\newtheorem*[{}\it]{notation}{Notation}
\newtheorem*[{}\it]{rest}{\textsc{Theorem}}
\newtheorem*[{}\it]{proofoflemma}{Proof of Lemma}
\title[]{Strict Physicality of Global Weak Solutions of a Navier-Stokes Q-tensor System with Singular Potential}
\author[]{Mark Wilkinson\footnote{(\Letter\hspace{-0.5mm}) \tiny Correspondence to: \href{mailto:wilkinson@maths.ox.ac.uk}{\nolinkurl{wilkinson@maths.ox.ac.uk}}. Address: {\em Oxford Centre for Nonlinear Partial Differential Equations, Mathematical Institute, Oxford University, 24--29 St Giles', Oxford, OX1 3LB.}}}
\newcommand{\id}{I^{d}}
\newcommand{\iid}{\int_{I^{d}}}
\newcommand{\symn}{\mathrm{Sym}_{0}(d)}
\newcommand{\tr}{\mathrm{tr}}
\begin{document}

\begin{abstract} 
We study the existence, regularity and so-called `strict physicality' of weak solutions of a coupled Navier-Stokes Q-tensor system which is proposed as a model for the incompressible flow of nematic liquid crystal materials. An important contribution to the dynamics comes from a singular potential introduced by \textsc{Ball and Majumdar} \cite{bm1} which replaces the commonly employed Landau-de Gennes bulk potential. This is built into our model to ensure that a natural physical constraint on the eigenvalues of the Q-tensor order parameter is respected by the dynamics of the system. 
\end{abstract}

\maketitle

In this article, we construct global-in-time weak solutions to the following coupled Navier-Stokes Q-tensor system on the $d$-dimensional torus, which is an adaptation of a model of \textsc{Beris and Edwards} \cite{berised1}:
\begin{displaymath}
\text{(S)}\left\{
\begin{array}{c}
 \displaystyle \frac{\partial Q}{\partial t}(x, t)+\left(u(x, t)\cdot \nabla\right)Q(x, t)-S(Q(x, t), \nabla u(x, t)) \vspace{2mm} \\ \displaystyle = \Gamma\left(L\Delta Q(x, t)-\theta\frac{\partial\psi}{\partial Q}(Q(x, t))+\frac{\theta}{d}\mathrm{tr}\left[\frac{\partial\psi}{\partial Q}(Q(x, t))\right]I + \kappa\,Q(x, t)\right), \\  \\ \displaystyle \frac{\partial u}{\partial t}(x, t)+\left(u(x, t)\cdot\nabla\right)u(x, t) = \nu\Delta u(x, t) - \nabla p(x, t)+\mathrm{div}\,\left(\tau(x, t)+\sigma(x, t)\right) \\ \\  \nabla\cdot u(x, t) = 0.
\end{array}
\right.
\end{displaymath}
The dimension $d$ is either 2 or 3, $Q$ is a $d\times d$ matrix-valued map and $\Gamma, L, \theta, \kappa, \nu >0$ are constants. Moreover, $\psi$ is a convex map defined on symmetric and traceless $d\times d$ matrices, whose construction was recently given by \textsc{Ball and Majumdar} \cite{bm1} and which ensures the eigenvalues of the tensor field remain in a so-called \emph{physical interval}. The `tumbling' term $S$ and the stress tensors $\tau$ and $\sigma$ are defined in Section \ref{models}. Employing the map $\psi$ in system (S) allows us to infer that weak solutions $Q(\cdot, t)$ belong to $L^{\infty}$ for $t>0$. By virtue of the convexity of $\psi$, a suitable comparison principle argument allows us to infer \emph{strict physicality} of weak solutions (see Section \ref{secfour}, Stage III below), which in turn allows us to prove higher regularity of global weak solutions in dimension 2.

We now discuss the modelling problem in condensed matter physics which motives our study of the above coupled system. The analysis of system (S) begins in Section \ref{models}, so the mathematically-minded reader may wish to skip Section \ref{introduction} below.

\section{Motivation: Order Parameters and Physical Constraints}\label{introduction}

Nematic liquid crystals form a class of condensed matter systems whose constituent rod-like molecules give rise to rich nonlinear phenomena, such as isotropic-nematic phase transitions. Thermotropic nematic liquid crystals form a subclass whose optical properties change dramatically with variation of system temperature. Above a certain temperature threshold the constituent molecules are randomly oriented (the \emph{isotropic} phase), whereas below this threshold they tend to lie in locally preferred directions (the \emph{nematic} phase). 

It is a formidable problem to formulate a mathematically rigorous model of such phenomena based on classical molecular dynamics in the continuum. This is, naturally, due to the high dimensionality of the phase space in which the dynamics take place. Due to the forbidding complexity of such models, one is led to consider more tractable mesoscopic or macroscopic models which are built by employing the general principles of thermodynamics and classical mechanics.   
\subsection{Order Parameters}
One of the first steps to take when formulating such effective static or dynamic continuum theories is to select an \emph{order parameter} that captures the essential small scale structure in nematic systems. For meso-scale models such as the Maier-Saupe theory of statics \cite{chandr1} or dynamic Doi theories \cite{doi}, one typically posits as an order parameter a probability density $\rho$ on the unit sphere $\mathbb{S}^{d-1}$, which is to be regarded as encoding the average molecular orientation at material points in the spatial domain. As regards macro-scale models, there are a number of competing order parameters in the literature. Within the well-established Oseen-Frank \cite{frank1} or Ericksen-Leslie theories \cite{leslie1}, for example, one studies the respective static and dynamic properties of vector fields $n:\mathbb{R}^{d}\rightarrow\mathbb{R}^{d}$ taking values in the unit sphere, where $n(x)\in\mathbb{S}^{d-1}$ is known as a \emph{director}. 

The director-field formalism may however be viewed as restrictive, as the order parameter $n$ cannot account for {\em biaxiality} of liquid crystal configurations. The Nobel prize-winning physicist Pierre-Gilles de Gennes suggested a traceless and symmetric $d\times d$ matrix with real components to be more appropriate in this regard for the modelling of nematic materials.
%\subsection{Physical Constraints on Order Parameters}
%In order that a theory in condensed matter physics remain physically meaningful, it is important that the chosen order parameter is \emph{conserved}. For example, in dynamic Doi models, it would not do were the evolving probability density $\rho$ to lose its positivity or its unit mass properties. It is well known in the complex fluids literature that if one interprets the Q-tensor as a normalised matrix of second moments of an underlying probability distribution on molecular orientations, its eigenvalues are constrained to lie in a certain (dimension-dependent) interval. In this paper, we explore analytical issues associated with such a constraint in the setting of \emph{dynamics} generated by the system (S) above.

\subsection{The Q-Tensor Order Parameter}

We now present some basic properties of the Q-tensor order parameter. For further details, consult \textsc{De Gennes and Prost} \cite{degennes1}, \textsc{Majumdar} \cite{maj1} or \textsc{Newton and Mottram} \cite{mottram1}. In what follows, the spatial dimension $d$ will be either 2 or 3, the two-dimensional case corresponding to, for instance, thin films of nematic materials. 

\subsection{Physical Constraints: an Eigenvalue Inequality}\label{explan}
Suppose that to each point $x$ in a material domain $\Omega\subseteq\mathbb{R}^{d}$ we associate a probability density function $f_{x}$ on molecular orientations which lie in $\mathbb{S}^{d-1}$. In order to capture the $\mathbb{Z}_{2}$ `head-to-tail' symmetry of nematic molecules, each density is endowed with the antipodal symmetry $f_{x}(\omega)=f_{x}(-\omega)$ for all $\omega\in\mathbb{S}^{d-1}$. The \emph{Q-tensor order parameter} $Q(x)$ at $x\in \Omega$ is defined to be
\begin{equation*}
Q(x):=\int_{\mathbb{S}^{d-1}}\left(\omega\otimes\omega - \frac{1}{d}I\right)f_{x}(\omega)\,d\omega.
\end{equation*}
It is the normalised matrix of second-order moments of the probability measure on $\mathbb{S}^{d-1}$ with density $f_{x}$. One may quickly check that $Q(x)$ is a member of \begin{equation}\label{symnaughtdee}
\symn:=\left\{Q\in\mathbb{R}^{d\times d}\,:\,Q^{T}=Q\hspace{2mm}\text{and}\hspace{2mm}\mathrm{tr}\left[Q\right]=0\right\}.
\end{equation}
The term $-1/d I$ (which contains no information about the system) is included by convention in the definition so as to render the Q-tensor identically zero when $f_{x}$ is the uniform distribution on the unit sphere, corresponding to the \emph{isotropic} phase of a nematic material. In this way, the Q-tensor order parameter should be interpreted as a crude measure of the deviation of a nematic system from isotropy.

If one interprets a member of $\symn$ as a normalised matrix of second moments of some probability measure on nematic orientations, the eigenvalues of this order parameter are then constrained in the following manner. By the spectral decomposition theorem, every matrix $Q\in\symn$ has the representation
\begin{equation*}
Q=\lambda_{1}e_{1}\otimes e_{1}+ ... + \lambda_{d}e_{d}\otimes e_{d},
\end{equation*}
where each $e_{i}$ is a unit-norm eigenvector of $Q$ with corresponding eigenvalue $\lambda_{i}$. If $Q$ arises from some probability density $f$, we also have the identity
\begin{equation*}
\int_{\mathbb{S}^{d-1}}\left(\omega\otimes\omega - \frac{1}{d}I\right)f(\omega)\,d\omega = \sum_{j=1}^{d}\lambda_{j}e_{j}\otimes e_{j}.
\end{equation*}
Applying the action of the above matrices to a fixed eigenvector $e_{k}$ and then producing the result again with $e_{k}$, one discovers the equality
\begin{equation*}
\lambda_{k}=\int_{\mathbb{S}^{d-1}}(\omega\cdot e_{k})^{2}f(\omega)\, d\omega - \frac{1}{d},
\end{equation*}
from which one quickly deduces that $\lambda_{k}$ is constrained to satisfy the inequality $-1/d\leq \lambda_{k}\leq 1-1/d$ for $k=1, ..., d$. The cases of equality correspond to {\em perfect} crystalline nematic alignment, and so are excluded. Thus, the eigenvalues $\lambda_{1}, ..., \lambda_{d}$ of any such $Q\in\symn$ should satisfy
\begin{equation}\label{physicalregime}
-\frac{1}{d}< \lambda_{k}< 1-\frac{1}{d}\quad \text{for} \quad k=1, ..., d.
\end{equation}
We subseqently refer to all Q-tensors whose eigenvalues satisfy inequality \eqref{physicalregime} as \emph{physical}. 

From a modelling perspective, the physicality requirement \eqref{physicalregime} on the order parameter $Q$ presents a problem. It is often not clear (and sometimes not even the case) that solutions of static and dynamic theories of nematics which employ the Q-tensor order parameter respect this physicality condition. Clearly, an issue then arises as to how one should interpret the solutions of such theories in a meaningful way. 

One well-studied theory that employs the Q-tensor order parameter is the static Landau-de Gennes theory. The Landau-de Gennes energy in the case $d=3$ is given by
\begin{equation*}
E_{\mathrm{LdG}}[Q]:=\int_{\Omega}\left(\frac{L}{2}|\nabla Q|^{2}+\frac{a}{2}\mathrm{tr}\left[Q^{2}\right]-\frac{b}{3}\mathrm{tr}\left[Q^{3}\right]+\frac{c}{4}\left(\mathrm{tr}\left[Q^{2}\right]\right)^{2}\right)\,dx,
\end{equation*}
for a suitably smooth domain $\Omega\subseteq\mathbb{R}^{3}$ and with $a\in\mathbb{R}$ and $b, c>0$. In \cite{maj1}, \textsc{Majumdar} shows that bulk energy minimisers in the {\em spatially-homogeneous} Landau-de Gennes theory ($\nabla Q\equiv 0$) do not always respect the above physical constraint on eigenvalues. In particular, when laboratory values for the liquid crystal material MBBA are strapped to the material-dependent constants $(a, b, c)$, one finds that \eqref{physicalregime} is not respected 2$^{\circ}$C below the isotropic-nematic phase transition temperature.

Aside from such issues related to the practical value of the unconstrained Landau-de Gennes theory for modelling real nematic materials, one can appreciate from this simple example of \textsc{Majumdar} that it is not immediately clear in the more complicated case of {\em inhomogeneous} spatial profiles whether or not minimisers of $E_{\mathrm{LdG}}$ subject to physical boundary conditions are physical pointwise on the spatial domain $\Omega$.

Na\"{i}vely, one might hope to circumvent this problem in general simply by restricting the class of candidate minimising maps to
\begin{equation*}
\mathcal{A}:=\left\{Q\in H^{1}(\Omega)\,:\, -\frac{1}{d}<\lambda_{k}(Q(x))<1-\frac{1}{d} \quad\text{for}\hspace{2mm} k=1, ...,  d \hspace{2mm}\text{and almost every}\hspace{2mm} x\in\Omega\right\},
\end{equation*}
for instance. Even if one were to consider this new model, the problem remains for the case of dynamics. It is not an easy task in general to show that dynamics generated by physically-relevant equations preserve the convex set
\begin{equation*}
\triangle_{d}:=\left\{Q\in\symn\,:\, -\frac{1}{d}<\lambda_{i}(Q)<1-\frac{1}{d}\hspace{2mm}\text{for}\hspace{2mm} i= 1, ..., d\right\},
\end{equation*} 
i.e. that $Q(x, t)\in \triangle_{d}$ for almost every $x$ and $t>0$. The following approach of \cite{bm1}, which has its roots in the paper of \textsc{Katriel et al.} \cite{katriel}, allows one to treat the modelling issue in both statics and dynamics in the same manner.

\subsection{The Ball-Majumdar Singular Potential}
Recently, \textsc{Ball and Majumdar} \cite{bm1} proposed a qualitatively-similar continuum theory, for which the authors effectively `build in' physicality of the Q-tensor. 

We now briefly outline their construction of a singular map $\psi: \symn\rightarrow \mathbb{R}\cup\{\infty\}$ and then introduce the problem in statics which motivates our present problem in dynamics. Although the construction of $\psi$ in \cite{bm1} is performed in dimension $d=3$ alone, it may be generalised in a straightforward manner to the case $d=2$.

\subsection{The Roots of $\psi$ in Maier-Saupe Theory}
As a starting point, consider the spatially-homogeneous Maier-Saupe mean field theory, 
\begin{equation*}
I_{\mathrm{MS}}[\rho]:=\theta\int_{\mathbb{S}^{d-1}}\rho(\omega)\log{\rho(\omega)}\,d\omega + \frac{\kappa}{2}\int_{\mathbb{S}^{d-1}}\int_{\mathbb{S}^{d-1}}\left(\frac{1}{d}-(\omega\cdot\alpha)^{2}\right)\rho(\omega)\rho(\alpha)\,d\omega d\alpha,
\end{equation*}
for suitable probability density functions $\rho:\mathbb{S}^{d-1}\rightarrow [0, 1]$ satisfying the symmetry property $\rho(\omega)=\rho(-\omega)$. Moreover, $\theta$ represents temperature and the constant $\kappa$ encodes to some extent information on molecular interactions. A routine calculation reveals that $I_{\textrm{MS}}$ has the form
\begin{equation*}
I_{\mathrm{MS}}[\rho]:=\theta\int_{\mathbb{S}^{d-1}}\rho(\omega)\log{\rho(\omega)}\,d\omega -\frac{\kappa}{2}\tr\left[Q^{2}\right],
\end{equation*}
where $Q$ is the Q-tensor corresponding to the probability density $\rho$.

For a given \emph{physical} Q-tensor $Q\in\symn$, the authors consider the following natural entropy minimisation problem associated with $I_{\mathrm{MS}}$, namely
\begin{equation}\label{maps}
\min_{\mathcal{A}_{Q}}\int_{\mathbb{S}^{d-1}}\rho(\omega)\log{\rho(\omega)}\,d\omega,
\end{equation}
where
\begin{equation*}
\mathcal{A}_{Q}:=\left\{\rho:\mathbb{S}^{d-1}\rightarrow [0, 1]\,:\,\int_{\mathbb{S}^{d-1}}\rho(\omega)\,d\omega=1\quad\text{and}\quad \int_{\mathbb{S}^{d-1}}\left(\omega\otimes\omega-\frac{1}{d}I\right)\rho(\omega)\,d\omega=Q\right\}.
\end{equation*}
In \cite{bm1}, it is shown that this problem possesses a unique minimising density $\rho^{\ast}$ in the class $\mathcal{A}_{Q}$, given explicitly by
\begin{equation*}
\rho^{\ast}(\omega):=\frac{\exp{\left(\sum_{i=1}^{d}\mu_{i}\omega_{i}^{2}\right)}}{Z(\mu_{1}, ..., \mu_{d})}, 
\end{equation*} 
where $\mu_{1}, ..., \mu_{d}$ are Lagrange multipliers associated with the constraint that $Q$ be the Q-tensor of the density, and $Z(\mu_{1}, ..., \mu_{d})$ is a normalisation factor which ensures $\rho^{\ast}$ is of unit mass. By uniqueness of minimisers of \eqref{maps}, one can then construct a related singular map $\psi:\symn\rightarrow \mathbb{R}\cup\{\infty\}$,
\begin{displaymath}
\psi(Q):=\left\{
\begin{array}{ll}
\displaystyle\min_{\rho\in\mathcal{A}_{Q}}\int_{\mathbb{S}^{d-1}}\rho(\omega)\log{\rho(\omega)}\,d\omega & \quad \text{if} \quad \displaystyle -\frac{1}{d} < \lambda_{j}(Q)<1-\frac{1}{d}, \\ & \\ \infty & \quad \text{otherwise}.
\end{array}
\right.
\end{displaymath} 
Such a map establishes a framework in which non-physical Q-tensors are essentially forbidden. We henceforth denote the effective domain of $\psi$ by $\mathsf{D}(\psi):=\{A\in\symn\, :\, \psi(A)<\infty\}$. The following proposition records important properties of the potential $\psi$. 
\begin{prop} The map $\psi:\symn\rightarrow\mathbb{R}\cup\{\infty\}$ has the following properties:
\begin{itemize} 
\item[] \framebox{\emph{Analytic Properties}} \vspace{1mm}
\item[\textbf{(P1)}] The map $\psi$ is smooth on its effective domain, i.e. $\psi\in C^{\infty}(\mathsf{D}(\psi))$; 
\item[\textbf{(P2)}] It is bounded from below, i.e. there exists $\psi_{0}>0$ such that $-\psi_{0}\leq \psi(X)$ for all $X\in\symn$; 
\item[\textbf{(P3)}] It exhibits logarithmic blow-up as $X\rightarrow\partial\mathsf{D}(\psi)$ from the interior. \vspace{1mm}
\item[] \framebox{\emph{Geometric Property}} \vspace{1mm}
\item[\textbf{(P4)}] The map $\psi$ is convex on $\mathsf{D}(\psi)$, which is itself a convex subset of $\symn$.
\item[] \framebox{\emph{Algebraic Property}} \vspace{1mm}
\item[\textbf{(P5)}] The map $\psi$ is an isotropic function of $d\times d$ matrices, i.e. $\psi(RXR^{T})=\psi(X)$ for all $R\in\mathrm{SO}(d)$, whenever $X\in\symn$ is fixed.
\end{itemize}
\end{prop}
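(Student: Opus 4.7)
The plan is to exploit the variational characterisation of $\psi$ together with the explicit exponential form of the minimiser $\rho^{\ast}$, thereby reducing each of (P1)--(P5) either to a statement about the Lagrange-multiplier parametrisation $\mu\mapsto Q(\mu)$ or to the convexity of the integrand $x\log x$.

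For \textbf{(P1)}, the cornerstone is that the map $\Phi:\mu\mapsto \int_{\mathbb{S}^{d-1}}(\omega\otimes\omega-I/d)\rho^{\ast}(\omega;\mu)\,d\omega$ is a smooth diffeomorphism from the hyperplane $\{\mu\in\mathbb{R}^{d}:\sum_{i}\mu_{i}=0\}$ (which fixes the gauge redundancy $\mu\mapsto\mu+c\mathbf{1}$ coming from $|\omega|^{2}=1$) onto $\mathsf{D}(\psi)$. Its derivative equals the Hessian of $\log Z(\mu)$ restricted to that hyperplane, i.e.\ the covariance matrix of $(\omega_{1}^{2},\ldots,\omega_{d}^{2})$ under $\rho^{\ast}(\cdot;\mu)$, which is strictly positive definite. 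The inverse function theorem then yields smoothness of $Q\mapsto\mu(Q)$, and $\psi(Q)=\mu(Q)\cdot Q+\tr(\mu(Q))/d-\log Z(\mu(Q))$ is a smooth composition. Property \textbf{(P5)} is handled by the change of variables $\omega\mapsto R^{T}\omega$: this preserves the spherical surface measure, induces a bijection $\mathcal{A}_{Q}\leftrightarrow\mathcal{A}_{RQR^{T}}$, and leaves $\int\rho\log\rho\,d\omega$ invariant, so the two minima coincide.

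Properties \textbf{(P2)} and \textbf{(P4)} are short. For \textbf{(P2)}, the uniform density $\rho_{0}\equiv |\mathbb{S}^{d-1}|^{-1}$ globally minimises $\rho\mapsto\int\rho\log\rho\,d\omega$ over all probability densities on $\mathbb{S}^{d-1}$, so $\psi(Q)\geq -\log|\mathbb{S}^{d-1}|=:-\psi_{0}$. For \textbf{(P4)}, first note that $\mathsf{D}(\psi)$ is convex since the eigenvalue constraint is $-I/d\prec Q\prec(1-1/d)I$ in the Loewner order. For $Q_{0},Q_{1}\in\mathsf{D}(\psi)$ and $t\in[0,1]$, take their minimising densities $\rho_{0}^{\ast},\rho_{1}^{\ast}$, set $\rho_{t}:=t\rho_{1}^{\ast}+(1-t)\rho_{0}^{\ast}\in\mathcal{A}_{tQ_{1}+(1-t)Q_{0}}$, and apply pointwise convexity of $x\log x$:
\begin{equation*}
\psi(tQ_{1}+(1-t)Q_{0})\leq\int_{\mathbb{S}^{d-1}}\rho_{t}\log\rho_{t}\,d\omega\leq t\psi(Q_{1})+(1-t)\psi(Q_{0}).
\end{equation*}
Finally, for \textbf{(P3)}, consider $Q^{(n)}\to Q^{\infty}\in\partial\mathsf{D}(\psi)$ with, say, $\lambda_{k}(Q^{(n)})+1/d=:\epsilon_{n}\to 0^{+}$; the integral constraint $\int(\omega\cdot e_{k})^{2}\rho_{n}^{\ast}\,d\omega=\epsilon_{n}$ forces $\rho_{n}^{\ast}$ to concentrate inside a band $\{|\omega\cdot e_{k}|\leq O(\sqrt{\epsilon_{n}})\}$, and a direct entropy estimate (or, equivalently, an expansion of $\log Z(\mu)$ as $\mu_{k}\to-\infty$) yields the logarithmic blow-up rate.

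The main technical obstacle I anticipate is the rigorous verification of the diffeomorphism claim for $\Phi$ underlying \textbf{(P1)}: one must carefully handle the one-dimensional gauge redundancy among $(\mu_{1},\ldots,\mu_{d})$, confirm properness of $\Phi$ so as to recover surjectivity onto all of $\mathsf{D}(\psi)$, and control how $|\mu|\to\infty$ as $Q$ approaches $\partial\mathsf{D}(\psi)$. Once this picture is in place, the remaining items reduce to routine variational or convexity arguments, and the isotropy statement is essentially a one-line substitution.
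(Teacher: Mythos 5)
The paper itself offers no proof of this proposition: it simply refers the reader to Ball and Majumdar, so there is no internal argument against which to compare line by line. Your reconstruction follows the same variational and Lagrange-multiplier strategy that underlies the cited reference, and the parts you spell out are sound. The entropy bound for \textbf{(P2)} (the uniform density minimises $\rho\mapsto\int\rho\log\rho$, giving $\psi\geq-\log|\mathbb{S}^{d-1}|$), the change-of-variables argument for \textbf{(P5)}, and the pointwise-convexity-of-$x\log x$ argument for \textbf{(P4)} are all correct and complete as stated; the last of these is in fact a clean alternative to computing the Hessian of $\psi$ directly. For \textbf{(P4)} note you are also implicitly using that $\mathsf{D}(\psi)=\triangle_{d}$, i.e.\ that the constrained minimisation admits a density with finite entropy whenever the eigenvalues lie strictly inside $(-1/d,1-1/d)$ — worth a sentence, though it is part of the cited existence/uniqueness result you already invoke.

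Your treatment of \textbf{(P1)} is the genuinely substantive part, and you have correctly identified where the work lies: the strict positive-definiteness of $D\Phi$ on the gauge-fixed hyperplane (the covariance of $(\omega_{1}^{2},\dots,\omega_{d}^{2})$ under $\rho^{\ast}$), and the properness/surjectivity of $\Phi$ onto $\mathsf{D}(\psi)$. These are precisely the points that Ball and Majumdar address, and your formula $\psi(Q)=\mu(Q)\cdot Q+\tr\mu(Q)/d-\log Z(\mu(Q))$ is the right reduction. The one item that remains at the level of heuristic is \textbf{(P3)}: the concentration-in-a-band picture gives the right order $\|\rho^{\ast}\|_{\infty}\sim\epsilon^{-1/2}$ near the lower endpoint (and $\sim\epsilon^{-(d-1)/2}$ near the upper), which is consistent with logarithmic growth of the entropy, but you have not actually produced the matching lower bound $\psi(Q)\gtrsim\log(1/\epsilon)-C$; an expansion of $\log Z(\mu)$ as the relevant $\mu_{k}\to\pm\infty$, as you suggest, or a Jensen-type lower bound localised to the band, is needed to close this. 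In short, your proposal is the standard route, with the genuine effort concentrated — as you flag — in the properness of the multiplier map and the precise blow-up asymptotics.
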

\begin{proof}
We refer the reader to \textsc{Ball and Majumdar} \cite{bm2} for details.
\end{proof}
Having constructed the map $\psi$, the authors replace the study of the Landau-de Gennes energy functional $E_{\mathrm{LdG}}$ with the following functional
\begin{equation*}
E_{\mathrm{BM}}[Q]:=\int_{\Omega}\left(\frac{L}{2}|\nabla Q|^{2}+\theta\,\psi(Q)-\frac{\kappa}{2}\tr\left[Q^{2}\right]\right)\,dx,
\end{equation*}
by replacing the Landau-de Gennes bulk energy density $a(\theta)\mathrm{tr}[Q^{2}]/2-b\,\mathrm{tr}[Q^{3}]/3+c\,\mathrm{tr}[Q^{4}]/4$ with the singular map $\theta\,\psi(Q)-\kappa\,\mathrm{tr}[Q^{2}]/2$ associated with the Maier-Saupe energy. The construction of $\psi$ in \cite{bm1} is especially pleasing as whilst it resolves the above modelling issue for the static theory of nematics, the map $\psi$ is by no means confined to the study of statics. Using techniques from convex analysis (see \textsc{Rockafellar} \cite{rockafellar1997convex} or \textsc{Ekeland and Temam} \cite{eketem1}) we study $\psi$ in the setting of dynamics. 

It proves important to make the following dichotomy between {\em physical} and {\em strictly physical} tensor fields $Q:\Omega\rightarrow\symn$.
\begin{defn}[{\bf Physicality}] 
The field $Q:\Omega\rightarrow\symn$ is said to be \textit{physical} on $\Omega$ if and only if $-1/d <\lambda_{i}(Q(x))<1-1/d$ for almost every $x\in\Omega$ and $i=1, ..., d$. 
\end{defn}
We compare this with the stronger notion of \emph{strict} physicality.
\begin{defn}[\bf Strict Physicality]
The field $Q:\Omega\rightarrow\symn$ is said to be \textit{strictly physical} on $\Omega$ if and only if there exists $\delta>0$ sufficiently small such that $\delta-1/d\leq \lambda_{i}(Q(x))\leq 1-1/d-\delta$ for almost every $x\in\Omega$ and $i=1, ..., d$.
\end{defn}
The reader can check that if the map $Q:\Omega\rightarrow \symn$ satisfies $\psi(Q)\in L^{1}$ for any given map satisfying \textbf{(P1)} to \textbf{(P5)} above, then it is physical. On the other hand, it is the case that $Q$ is strictly physical if and only if $\psi(Q)\in L^{\infty}$. An issue similar to the eigenvalue constraint in liquid crystal theory also arises in the theory of elasticity (see \cite{ball1}), where one is interested in demonstrating that the determinant of the deformation gradient tensor is bounded uniformly away from zero, i.e. $\mathrm{det}(\nabla y(x))\geq \gamma$ for some $\gamma>0$. However, this problem is more challenging as constraints are being placed upon {\em derivatives} as opposed to the undifferentiated field variable. 

We now introduce the problem of study in \cite{bm1} which motivates our present problem in dynamics from a modelling point of view.
\subsection{Motivation for the Problem in Dynamics}
Consider the minimisation problem
\begin{equation*}
\min_{\mathcal{B}}\int_{\Omega}\left(\frac{L}{2}|\nabla Q|^{2}+\theta\,\psi(Q)-\frac{\kappa}{2}\tr\left[Q^{2}\right]\right)\,dx,
\end{equation*}
where the appropriate candidate maps in $\mathcal{B}$ have {\em strictly physical} trace on the boundary $\partial\Omega$. One can show using a maximum principle approach that minimisers are strictly physical throughout the domain $\Omega$. We wish to answer a question which is similar in spirit for the case of dynamics. Supposing that the Q-tensor field evolves under system (S) above, we ask the following:\vspace{3mm}
\newline
\framebox{
\begin{minipage}[b]{0.965\linewidth}
\centering
If initial data $(Q_{0}, u_{0})$ are of finite energy, namely
\begin{equation*}
\mathcal{E}(Q_{0}, u_{0}):=\int_{\id}\left(\frac{1}{2}|u_{0}|^{2}+\frac{L}{2}|\nabla Q_{0}|^{2}+\theta\,\psi(Q_{0})-\frac{\kappa}{2}\mathrm{tr}\left[Q_{0}^{2}\right]\right)\,dx<\infty,
\end{equation*}
is it the case that weak solutions $Q(\cdot, t)$ are strictly physical for $t>0$, i.e. $\psi(Q(\cdot, t))\in L^{\infty}$?
\end{minipage}
}
\vspace{3mm}
\newline
This is one of the central questions of this paper and we answer it in the affirmative. Knowledge that $\psi(Q(\cdot, t))\in L^{\infty}$ allows us to infer $Q(\cdot, t)\in L^{\infty}$ by boundedness of the related eigenvalue maps. The `strict physicality' property of solutions is an attractive feature from the point of view of regularity theory. Using the fact that the range of weak solutions $Q$ then belongs to a compact subset of $\symn$, we are able to prove higher regularity of solutions that persists for all time $t>0$ in dimension 2. 

The unmodified version of system (S), taken from \textsc{Beris and Edwards} \cite{berised1}, has also been studied in the physics community by, among others, \textsc{Yeomans et al.} \cite{yeo1, yeo2} and also in the mathematical community by \textsc{Paicu and Zarnescu} \cite{zarnescu1, zarnescu2}.
\begin{rem}
One might consider the above boxed question to be natural, as it is characteristic of evolution equations of parabolic type that solutions with $L^{1}$ initial data are instantaneously in $L^{\infty}$: for more details on such topics, one could consult \textsc{Ladyzhenskaya, Solonnikov and Ural'ceva} \cite{ladyzhenskaya1}. Although one may not be able to use the evolution equation for the tensor field to close an equation for the quantity $\psi(Q)$, one can nevertheless derive a parabolic \emph{inequation} satisfied by $\psi(Q)$, from which a suitable comparison principle argument yields strict physicality of weak solutions $Q$. 
\end{rem} 

\section{The Class of Models}\label{models}
In what follows, the spatial dimension $d$ will be either 2 or 3 and we employ the Einstein summation convention over repeated indices $i, j, k, \ell, m$ and $n$ with range in $\{1, ..., d\}$ throughout. Moreover, $\langle A\rangle:=A-d^{-1}\mathrm{tr}[A]I$ denotes the trace-free part of any matrix $A\in\mathbb{R}^{d\times d}$. For strictly positive constants $\Lambda, \Gamma, L, \theta, \kappa$ and $\nu$, we search for maps $Q: I^{d}\times (0, \infty)\rightarrow\mathrm{Sym}_{0}(d)$ and $u: I^{d}\times (0, \infty)\rightarrow \mathbb{R}^{d}$ on the spatial domain $I^{d}:=[-\Lambda\pi/2, \Lambda\pi/2]^{d}$ which satisfy the coupled system
\begin{displaymath}
\text{(S)}\left\{
\begin{array}{c}
 \displaystyle \frac{\partial Q}{\partial t}(x, t)+\left(u(x, t)\cdot \nabla\right)Q(x, t)-S(Q(x, t), \nabla u(x, t)) \vspace{2mm} \\ \displaystyle = \Gamma\left(L\Delta Q(x, t)-\theta\left\langle\frac{\partial\psi}{\partial Q}(Q(x, t))\right\rangle + \kappa\,Q(x, t)\right), \\  \\ \displaystyle \frac{\partial u}{\partial t}(x, t)+\left(u(x, t)\cdot\nabla\right)u(x, t) = \nu\Delta u(x, t) - \nabla p(x, t)+\mathrm{div}\,\left(\tau(x, t)+\sigma(x, t)\right) \\ \\  \nabla\cdot u(x, t) = 0,
\end{array}
\right.
\end{displaymath}
in the distributional sense. Solutions are subject to the periodic boundary conditions on the domain $\id$ and evolve from given initial data $Q_{0}:I^{d}\rightarrow\mathrm{Sym}_{0}(d)$ and $u_{0}: I^{d}\rightarrow \mathbb{R}^{d}$ to which they converge in an appropriate topology (strong-in-norm or weakly) as $t$ tends to 0 from above. 

The term $S$ expresses to what extent the flow $u$ locally `twists' and `stretches' the order parameter $Q$, and is given by
\begin{equation}\label{ess}
S(Q, \nabla u):=\left(D_{0}+\xi D\right)\left(Q+\frac{1}{d}I\right)-\left(Q+\frac{1}{d}I\right)\left(D_{0}-\xi D\right)-2\xi\left(Q+\frac{1}{d}I\right)\mathrm{tr}\left[Q\nabla u\right],
\end{equation}
where $\xi\in\mathbb{R}$ is a rotational parameter whose value will be of some significance in the course of our analysis. The tensors $D_{0}$ and $D$ defined by
\begin{equation*}
D_{0}:=\frac{1}{2}\left((\nabla u)-(\nabla u)^{T}\right) \qquad \text{and} \qquad D:=\frac{1}{2}\left((\nabla u)+(\nabla u)^{T}\right)
\end{equation*}
are the anti-symmetric and symmetric parts of the velocity gradient tensor $\nabla u$, respectively. The stress tensors $\tau$ and $\sigma$ are given component-wise by
\begin{align}\label{tau}
\tau_{ij}:= & -\xi\left(Q_{ik}+\frac{1}{d}\delta_{ik}\right)H_{kj}-\xi H_{ik}\left(Q_{kj}+\frac{1}{d}\delta_{kj}\right) \notag \\ \quad & +2\xi\left(Q_{ij}+\frac{1}{d}\delta_{ij}\right)\mathrm{tr}\left[QH\right] - L\,\mathrm{tr}\left[\frac{\partial Q}{\partial x_{i}}\frac{\partial Q}{\partial x_{j}}\right]
\end{align}
and
\begin{equation}\label{sigma}
\sigma_{ij}:= Q_{ik}H_{kj}-H_{ik}Q_{kj},
\end{equation}
where $H$ is defined for notational simplicity to be 
\begin{equation*}
H:=L\Delta Q-\theta\left\langle\frac{\partial\psi}{\partial X}(Q)\right\rangle+\kappa\,Q.
\end{equation*}
Moreover, the singular map $\psi$ belongs to the non-empty class of all maps on $\symn$ satisfying properties \textbf{(P1)} to \textbf{(P5)} above.
\begin{rem}
The right-hand side of the $Q$-equation is the formal $L^{2}$ gradient of the energy functional $E_{\mathrm{BM}}$
which replaces the usual contribution from the more commonly-adopted Landau-de Gennes energy $E_{\mathrm{LdG}}$. On thermodynamic grounds, we justify such a replacement since both bulk potentials are smooth on their respective effective domains, and are \emph{qualitatively} similar to one another, in the sense that both can describe a first-order nematic-isotropic phase transition (see Section 4 of \cite{bm1} for more on this point) and possess the same material symmetry, viz. \textbf{(P5)}.  
\end{rem}
\subsection{Statement of Main Results}\label{maintheorem}
The following two theorems contain the main results of this paper.
\begin{thm}\label{theoremone}
For initial data $(Q_{0}, u_{0})\in H^{1}\times L^{2}_{\mathrm{div}}$ of finite energy
\begin{equation*}
\mathcal{E}(Q_{0}, u_{0})=\int_{\id}\left(\frac{1}{2}|u_{0}|^{2}+\frac{L}{2}|\nabla Q_{0}|^{2}+\theta\,\psi(Q_{0})-\frac{\kappa}{2}\mathrm{tr}\left[Q_{0}^{2}\right]\right)\,dx<\infty,
\end{equation*} 
there exist maps $Q\in L^{\infty}_{\mathrm{loc}}((0, \infty); H^{1})\cap L^{2}_{\mathrm{loc}}((0, \infty); H^{2})$ and $u\in L^{\infty}_{\mathrm{loc}}((0, \infty); L^{2}_{\mathrm{div}})\cap L^{2}_{\mathrm{loc}}((0, \infty); H^{1}_{\mathrm{div}})$ which satisfy the coupled system \textnormal{(S)} in the distributional sense. The map $Q$ is also strictly physical for positive time, i.e.
\begin{equation*}
\psi(Q(\cdot, t))\in L^{\infty} \quad \text{for almost every} \quad t>0.
\end{equation*}
\end{thm}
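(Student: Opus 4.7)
The plan is a four-stage construction: regularize the singular bulk, solve the regularized system by a Galerkin scheme, pass to the limit using compactness and convexity, and finally derive a parabolic differential inequality for $\psi(Q)$ from which strict physicality follows by an $L^{\infty}$ comparison argument. Because the singular gradient $\partial\psi/\partial Q$ blows up at $\partial\mathsf{D}(\psi)$, I would first replace $\psi$ by the Moreau--Yosida approximation
\begin{equation*}
\psi_{\varepsilon}(X):=\inf_{R\in\symn}\left\{\psi(R)+\tfrac{1}{2\varepsilon}|R-X|^{2}\right\},
\end{equation*}
which is convex, globally $C^{1,1}$ with Lipschitz gradient, and satisfies $\psi_{\varepsilon}\nearrow\psi$ pointwise as $\varepsilon\to 0^{+}$. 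This yields a smooth regularized system $(\text{S}_{\varepsilon})$ on $\id$.

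For each $\varepsilon>0$, standard Faedo--Galerkin arguments on eigenspaces of $-\Delta$ with periodic boundary conditions, coupled with a fixed-point argument for the nonlinear coupling, produce finite-dimensional approximants that converge to a solution $(Q_{\varepsilon},u_{\varepsilon})$ of $(\text{S}_{\varepsilon})$. The driving a priori estimate is the energy identity obtained by testing the $Q$-equation against $H_{\varepsilon}:=L\Delta Q_{\varepsilon}-\theta\lla\partial\psi_{\varepsilon}/\partial Q(Q_{\varepsilon})\rra+\kappa Q_{\varepsilon}$ and the $u$-equation against $u_{\varepsilon}$, exploiting the algebraic cancellation $\int Q_{\varepsilon}:S\,dx+\int\mathrm{div}(\tau+\sigma)\cdot u_{\varepsilon}\,dx=0$ that is the \emph{raison d'\^etre} of the specific forms \eqref{ess}--\eqref{sigma} of $S$, $\tau$ and $\sigma$. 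This yields the identity
\begin{equation*}
\frac{d}{dt}\mathcal{E}_{\varepsilon}(Q_{\varepsilon},u_{\varepsilon})+\nu\|\nabla u_{\varepsilon}\|_{L^{2}}^{2}+\Gamma\|H_{\varepsilon}\|_{L^{2}}^{2}=0,
\end{equation*}
whence uniform bounds on $u_{\varepsilon}$ in $L^{\infty}L^{2}\cap L^{2}H^{1}_{\mathrm{div}}$, on $Q_{\varepsilon}$ in $L^{\infty}H^{1}$, on $H_{\varepsilon}$ in $L^{2}L^{2}$ and on $\psi_{\varepsilon}(Q_{\varepsilon})$ in $L^{\infty}L^{1}$ follow.

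Compactness via Aubin--Lions gives strong convergence $Q_{\varepsilon}\to Q$ in $C([0,T];L^{2})\cap L^{2}(0,T;H^{1})$ and $u_{\varepsilon}\to u$ in $L^{2}(0,T;L^{2})$, enough to pass to the limit in all nonlinearities of polynomial type. For the singular term, pointwise a.e.\ convergence together with the Fatou-type lower semicontinuity of $\psi$ furnished by property \textbf{(P4)} first yields $\psi(Q)\in L^{\infty}L^{1}$, so that $Q\cd$ is physical for a.e.\ $t>0$; the limit identity for $\lla\partial\psi/\partial Q(Q)\rra$ is then obtained by a Minty--monotonicity argument exploiting convexity. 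The improved regularity $Q\in L^{2}_{\mathrm{loc}}((0,\infty);H^{2})$ comes from elliptic regularity applied to $L\Delta Q=H+\theta\lla\partial\psi/\partial Q(Q)\rra-\kappa Q$ once the $L^{\infty}$ bound on $Q$ established below is in hand.

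The heart of the theorem is strict physicality. Applying the chain rule to $\psi_{\varepsilon}(Q_{\varepsilon})$ at the approximate level, using the $Q$-equation, and invoking the pointwise convexity inequality $(\partial\psi_{\varepsilon}/\partial Q)(Q_{\varepsilon}):\Delta Q_{\varepsilon}\leq\Delta\psi_{\varepsilon}(Q_{\varepsilon})$, one obtains the parabolic differential inequation
\begin{equation*}
\partial_{t}\psi_{\varepsilon}(Q_{\varepsilon})+(u_{\varepsilon}\cdot\nabla)\psi_{\varepsilon}(Q_{\varepsilon})-\Gamma L\Delta\psi_{\varepsilon}(Q_{\varepsilon})\leq -\Gamma\theta\left|\lla\tfrac{\partial\psi_{\varepsilon}}{\partial Q}(Q_{\varepsilon})\rra\right|^{2}+\tfrac{\partial\psi_{\varepsilon}}{\partial Q}(Q_{\varepsilon}):S(Q_{\varepsilon},\nabla u_{\varepsilon})+\Gamma\kappa\,\tfrac{\partial\psi_{\varepsilon}}{\partial Q}(Q_{\varepsilon}):Q_{\varepsilon}.
\end{equation*}
A Young split absorbs the tumbling contribution into the good dissipative term $-\Gamma\theta|\lla\partial\psi_{\varepsilon}/\partial Q\rra|^{2}$, leaving a right-hand side bounded pointwise by $C(1+|\nabla u_{\varepsilon}|^{2}+|Q_{\varepsilon}|^{2})$. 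A Moser iteration (equivalently, an Aronson--Serrin $L^{\infty}$ estimate for linear parabolic equations with divergence-free drift in $L^{\infty}L^{2}\cap L^{2}H^{1}$ and forcing in $L^{1+\delta}$) then gives $\|\psi_{\varepsilon}(Q_{\varepsilon})\cd\|_{L^{\infty}}\leq C(t)$ uniformly in $\varepsilon$; lower semicontinuity transfers this bound to $\psi(Q\cd)$. The principal obstacle is precisely this last iteration: the tumbling product $(\partial\psi_{\varepsilon}/\partial Q):S$ mixes the very quantity one is trying to control with $\nabla u$, and the cancellation with the Onsager term must be tracked carefully so that the residual forcing lies in an admissible parabolic class, even in dimension three where $\nabla u$ is only $L^{2}_{t}L^{2}_{x}$.
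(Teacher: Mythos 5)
Your overall architecture—regularize $\psi$, Galerkin, energy identity via testing against $H_{\varepsilon}$ and $u_{\varepsilon}$, Aubin--Lions, and a parabolic inequation for $\psi_{\varepsilon}(Q_{\varepsilon})$ driving the $L^{\infty}$ bound—matches the paper's strategy in outline, and you correctly locate the crux: the tumbling product $\tfrac{\partial\psi_{\varepsilon}}{\partial Q}(Q_{\varepsilon}):S(Q_{\varepsilon},\nabla u_{\varepsilon})$. But you leave a genuine gap precisely there, and you are right to be worried: a Young split of this term against the dissipative piece $-\Gamma\theta\left|\left\langle\tfrac{\partial\psi_{\varepsilon}}{\partial Q}(Q_{\varepsilon})\right\rangle\right|^{2}$ produces a residual forcing of size $|\nabla u_{\varepsilon}|^{2}$, and with $\nabla u\in L^{2}_{t}L^{2}_{x}$ only, this lies outside every admissible class for Moser iteration or the Aronson--Serrin theory. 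This is not a technicality one can sweep under the rug; the paper states explicitly that it does \emph{not} handle the general $\xi$ and restricts the whole of Stage III to the co-rotational case $\xi=0$. In that case the tumbling term reduces to the commutator $\omega_{0}Q-Q\omega_{0}$ with $\omega_{0}$ skew, and the decisive structural fact—which your proposal misses—is the isotropy property \textbf{(P5)}: since $\psi$ (and a suitably rotationally invariant regularisation thereof) is frame-indifferent, $\tfrac{\partial\psi}{\partial Q}(Q)$ commutes with $Q$, so $\tfrac{\partial\psi}{\partial Q}(Q):(\omega_{0}Q-Q\omega_{0})\equiv 0$. The tumbling contribution therefore vanishes \emph{identically} in the $\psi$-inequation; no Young split is needed and no $|\nabla u|^{2}$ forcing appears. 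The surviving forcing is merely $\tfrac{\Gamma\kappa^{2}}{2\theta}\mathrm{tr}[Q^{2}]$, which is bounded once physicality is in hand. Without invoking this algebraic cancellation, your argument cannot close in either $d=2$ or $d=3$.

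Two further methodological differences are worth noting, though they are routes rather than gaps. First, the paper does not close the $L^{\infty}$ bound by Moser iteration: it splits the comparison into a homogeneous drift--diffusion problem with zero-mean $L^{1}$ data (controlled by the $L^{1}\to L^{\infty}$ smoothing estimate of \textsc{Constantin et al.}, which is where the $t^{-d/2-\gamma}$ singularity and hence the ``almost every $t>0$'' in the statement come from) and an inhomogeneous problem with constant data and bounded forcing (controlled by $L^{p}$ energy estimates with $p\to\infty$), then invokes the classical parabolic maximum principle for the difference. Second, and structurally, the paper keeps two approximation parameters: a Galerkin index $M$ for the velocity and a regularization index $N$ for $\psi$, and passes $N\to\infty$ \emph{before} $M\to\infty$. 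This ordering is not cosmetic: the maximum principle and the comparison analysis are carried out while $u^{(M)}$ is still a smooth finite-dimensional field, which a single diagonal parameter $\varepsilon$ does not provide. Your single-parameter Moreau--Yosida approach also gives only $C^{1,1}$ regularity of $\psi_{\varepsilon}$, whereas the paper composes the Moreau--Yosida envelope with a mollification to obtain $C^{\infty}$ and convex approximants, which is what the pointwise chain-rule and Hessian-positivity manipulations require.
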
\label{theoremtwo}
Thus, if one assumes $\mathcal{E}(Q_{0}, u_{0})$ is finite, one can infer the strict physicality of weak solutions $Q(\cdot, t)$ for $t>0$. If one endows the initial data with higher regularity and insists on $Q_{0}$ being {\em strictly physical}, the following holds true.
\begin{thm}[dimension $d=2$]
If $u_{0}\in H^{1}_{\mathrm{div}}$ and $Q_{0}\in H^{2}$ with $\psi(Q_{0})\in L^{\infty}$, then there exist maps $Q\in L^{\infty}(0, T; H^{2})\cap L^{2}(0, T; H^{3})$ and $u\in L^{\infty}(0, T; H^{1}_{\mathrm{div}})\cap L^{2}(0, T; H^{2}_{\mathrm{div}})$ for any $T>0$ which satisfy \textnormal{(S)} in the sense of distributions. 
\end{thm}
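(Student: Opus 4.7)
Our starting point is the Galerkin (or regularised-$\psi$) approximating sequence $(Q_{n},u_{n})$ used in the proof of Theorem~\ref{theoremone}; what remains is to derive additional a priori bounds of the desired strength, after which passage to the limit follows the pattern established there. The first observation is that the hypothesis $\psi(Q_{0})\in L^{\infty}$ lets the parabolic-inequation and comparison-principle argument from Stage~III of Section~\ref{secfour} be run from the initial time, rather than from some arbitrary $t_{0}>0$, yielding
\begin{equation*}
\|\psi(Q(\cdot,t))\|_{L^{\infty}}\le C\bigl(\|\psi(Q_{0})\|_{L^{\infty}},T,\text{data}\bigr),\qquad t\in[0,T].
\end{equation*}
By \textbf{(P3)} this confines $Q(x,t)$ to a fixed compact set $K\subset\mathsf{D}(\psi)$ for a.e.~$(x,t)\in\id\times(0,T)$, and by \textbf{(P1)} all intrinsic derivatives $\partial^{k}\psi/\partial Q^{k}$ are uniformly bounded on $K$. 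This is the one ingredient that converts the singular $\psi$-term into an ordinary lower-order nonlinearity.

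\textbf{Coupled $H^{2}\times H^{1}$ energy estimate.} At the approximate level we pair the momentum equation with $-\Delta u$ and apply $\Delta$ to the $Q$-equation before pairing with $L\Delta Q$. Periodicity, the incompressibility of $u$, and integration by parts then produce
\begin{equation*}
\frac{d}{dt}\!\left(\tfrac{1}{2}\|\nabla u\|_{L^{2}}^{2}+\tfrac{L}{2}\|\Delta Q\|_{L^{2}}^{2}\right)+\nu\|\Delta u\|_{L^{2}}^{2}+\Gamma L^{2}\|\nabla\Delta Q\|_{L^{2}}^{2}\le\mathcal{R},
\end{equation*}
where $\mathcal{R}$ groups (i) $\int(u\cdot\nabla)u\cdot\Delta u\,dx$; (ii) the commutator $\int[\Delta,u\cdot\nabla]Q:L\Delta Q\,dx$; (iii) $\int\Delta S(Q,\nabla u):L\Delta Q\,dx$ arising from \eqref{ess}; (iv) the stress contribution $\int\mathrm{div}(\tau+\sigma)\cdot\Delta u\,dx$ produced by \eqref{tau}--\eqref{sigma}; and (v) the $\psi$-term, which by the chain rule reduces to
\begin{equation*}
-\Gamma L\theta\iid\Bigl(\partial^{2}\psi/\partial Q^{2}(Q):\Delta Q+\partial^{3}\psi/\partial Q^{3}(Q):(\nabla Q\otimes\nabla Q)\Bigr):\Delta Q\,dx,
\end{equation*}
both summands of which are immediately bounded by $C\|\Delta Q\|_{L^{2}}^{2}+C\|\nabla Q\|_{L^{4}}^{2}\|\Delta Q\|_{L^{2}}$, thanks to the $L^{\infty}$ bounds on $Q$ from the previous step.

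\textbf{Closing the estimate and the main obstacle.} The terms in $\mathcal{R}$ are absorbed into the dissipation on the left-hand side using the two-dimensional Ladyzhenskaya inequality $\|f\|_{L^{4}}^{2}\le C\|f\|_{L^{2}}\|\nabla f\|_{L^{2}}$, the Agmon interpolation $\|f\|_{L^{\infty}}\le C\|f\|_{L^{2}}^{1/2}\|f\|_{H^{2}}^{1/2}$, and Young's inequality, ultimately giving
\begin{equation*}
\frac{d}{dt}X(t)+Y(t)\le C\bigl(1+\|\nabla u\|_{L^{2}}^{2}\bigr)\bigl(1+X(t)\bigr),
\end{equation*}
with $X(t):=\tfrac{1}{2}\|\nabla u(t)\|_{L^{2}}^{2}+\tfrac{L}{2}\|\Delta Q(t)\|_{L^{2}}^{2}$ and $Y(t)$ comprising a positive multiple of $\|\Delta u\|_{L^{2}}^{2}+\|\nabla\Delta Q\|_{L^{2}}^{2}$. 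Since $\|\nabla u\|_{L^{2}}^{2}\in L^{1}(0,T)$ from the basic energy identity underlying Theorem~\ref{theoremone}, and $X(0)<\infty$ by hypothesis, Gronwall's lemma delivers $X\in L^{\infty}(0,T)$ and $Y\in L^{1}(0,T)$, which is the announced regularity. The genuine difficulty, and the precise reason for restricting to $d=2$, lies in item (iv): through $\tau$ the stress contains the quadratic gradient term $-L\tr[\partial_{i}Q\,\partial_{j}Q]$ and, via $H$, a $L\Delta Q$ piece, so that $\mathrm{div}(\tau+\sigma)\cdot\Delta u$ produces trilinear products of three second-order objects, schematically $\int\nabla Q\,\nabla\Delta Q\,\Delta u\,dx$ and $\int\nabla Q\,\Delta Q\,\Delta u\,dx$; Ladyzhenskaya and Agmon close these in dimension two but fail to do so in dimension three. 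Once the bounds are in place at the Galerkin level, they are transferred to the limit $(Q,u)$ by standard weak and weak-$\ast$ compactness together with the Aubin-Lions lemma, which also allows identification of all nonlinear terms.
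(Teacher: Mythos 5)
Your proposal is correct and tracks the paper's own argument (Section \ref{higherreg}) closely: strict physicality of $Q^{(M)}$ from $t=0$ via the Stage III comparison principle, the higher-order functional $\mathcal{F}=\tfrac12\|\nabla u\|_{2}^{2}+\tfrac{L}{2}\|\Delta Q\|_{2}^{2}$, two-dimensional Ladyzhenskaya/Agmon interpolation to absorb the trilinear terms, Gronwall with an $L^{1}(0,T)$-in-time coefficient coming from the basic energy identity, and Aubin--Lions to pass to the limit. Two small bookkeeping differences are worth flagging, though neither changes the substance. First, the paper packages the convective terms and the $L\,\mathrm{tr}[\partial_i Q\,\partial_j Q]$ part of $\tau$ through the single exact cancellation identity
\begin{equation*}
L\iid \Delta u_{i}\,\partial_{j}\!\left(\partial_{i}Q_{mn}\,\partial_{j}Q_{nm}\right)dx - L\iid \Delta Q : \Delta\!\left((u\cdot\nabla)Q\right)dx = 2L\iid \partial_{k}\Delta Q : (u\cdot\nabla)\partial_{k}Q\,dx,
\end{equation*}
after which only the single commutator-type remainder $\mathcal{K}_{2}$ survives; your decomposition into items (ii)--(iv) leaves you with more terms to estimate individually, which is fine in $d=2$ but worth noting as a less economical route. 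Second, the paper first establishes the \emph{qualitative} regularity $Q^{(M)}\in L^{\infty}_{t}H^{2}_{x}\cap L^{2}_{t}H^{3}_{x}$ for each fixed $M$ (by comparing against the strong solution $R^{(M)}$ of an auxiliary linear parabolic problem) so that the computation with $\mathcal{F}^{(M)}$ is justified, before deriving the $M$-uniform Riccati-type bound $\tfrac{d}{dt}\mathcal{F}^{(M)}\le C_{0}(\mathcal{F}^{(M)})^{2}+C_{1}$ and closing with $\mathcal{F}^{(M)}\in L^{1}(0,T)$; your form $\tfrac{d}{dt}X+Y\le C(1+\|\nabla u\|_{2}^{2})(1+X)$ is a slightly sharper restatement of the same Gronwall input and is equally valid.
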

%\begin{rem}\label{linfty}
%We make the important observation that if we know solutions $Q$ are physical for $t>0$ (i.e. $\psi(Q(\cdot, t))\in L^{1}$), we may infer their membership of the space $L^{\infty}((0, \infty); L^{\infty})$. Since the range of the map $Q$ lies in $\symn$, by pointwise diagonalisation we may construct a map $\mathsf{R}: \id\times (0, \infty)\rightarrow \mathrm{SO}(d)$ with the property that
%\begin{equation*}
%\tr\left[Q^{2}\right]=\tr\left[\mathsf{R}Q\mathsf{R}^{T}\mathsf{R}Q\mathsf{R}^{T}\right]=\tr\left[\mathrm{diag}(\lambda, \mu, -\lambda-\mu)^{2}\right]=2\left(\lambda^{2}+\lambda\mu+\mu^{2}\right).
%\end{equation*}
%Since the eigenvalues of $Q$ are bounded in modulus by unity in both two and three spatial dimensions, we may deduce that
%\begin{equation*}
%\|Q(\cdot, t)\|_{p}:=\left(\int_{\id}\left(\tr\left[Q(x, t)^{2}\right]\right)^{\frac{p}{2}}\,dx\right)^{\frac{1}{p}}\leq \sqrt{6}\left(\Lambda\pi\right)^{\frac{d}{p}}.
%\end{equation*}
%Thus, if the solution map $Q$ is smooth enough, we have
%\begin{equation*}
%\|Q(\cdot, t)\|_{\infty}=\lim_{p\rightarrow\infty}\|Q(\cdot, t)\|_{p}\leq \sqrt{6},
%\end{equation*}
%from which we indeed deduce membership of $L^{\infty}((0, \infty); L^{\infty})$.
%\end{rem}
\label{classomodels}

%\input{notation}

%\section{Regularisation of the Singular Nematic Potential}

\subsection{Regularisation of $\psi$}
For the purposes of building a weak solution to the system (S), it is inconvenient to work directly with the singular map $\psi$. We work instead with a regularised map $\psi_{N}$ parameterised by the mollification index $N=1, 2, 3, ...$, such that $\psi$ can be recovered in the limit $N\rightarrow\infty$.

Firstly, for $J=1, 2, 3, ...$ we define $\psi_{J}:\symn\rightarrow \mathbb{R}$ to be the Yosida-Moreau regularisation of $\psi$, namely
\begin{equation}\label{yosidamoreau}
\psi_{J}(Q):=\min_{A\in\symn}\left(J|A-Q|^{2}+\psi(A)\right).
\end{equation}
Secondly, for fixed $J$ and for any $K=1, 2, 3, ...$ we define $\psi_{J, K}$ to be the standard mollification of the map $\psi_{J}$, namely
\begin{equation}\label{mollification}
\psi_{J, K}(Q):=K^{d^{2}}\int_{\mathbb{R}^{d\times d}}\psi_{J}(K(Q-R))\Phi(R)\,dR,
\end{equation}
where $\Phi\in C^{\infty}_{c}(\mathbb{R}^{d\times d}, \mathbb{R}_{+})$ has the unit mass property $\int_{\mathbb{R}^{d}\times\mathbb{R}^{d}}\Phi(R)\,dR=1$. Finally, we define $\psi_{N}:=\psi_{N, N}$ for each $N\geq 1$. We now quote without proof a number of properties of $\psi_{N}$, taken from \textsc{Feireisl et Al.} \cite{feireisl}, which are of use to us in the construction of weak solutions.
\begin{thm}
For each $N\geq 1$, the regularisation $\psi_{N}$ of the Ball-Majumdar potential has the following properties:
\begin{itemize}
\item[\textbf{(M1)}]\label{mone} The map $\psi_{N}$ is both $C^{\infty}$ and convex on $\mathbb{R}^{d\times d}$;
\item[\textbf{(M2)}] It is bounded from below, i.e. $-\psi_{0}\leq \psi_{N}(X)$ for all $X\in \mathbb{R}^{d\times d}$ and for all $N\geq 1$, where $\psi_{0}>0$ is the same constant appearing in \emph{\textbf{(P2)}};
\item[\textbf{(M3)}] $\psi_{N}\leq \psi_{N+1}\leq\psi$ on $\mathbb{R}^{d\times d}$ for $N\geq 1$;
\item[\textbf{(M4)}] $\psi_{N}\rightarrow \psi$ in $L^{\infty}_{\mathrm{loc}}(\mathsf{D}(\psi))$ as $N\rightarrow\infty$, and $\psi_{N}$ is uniformly divergent on $\symn\setminus\mathsf{D}(\psi)$ as $N\rightarrow\infty$;\vspace{1mm}
\item[\textbf{(M5)}] $\displaystyle\frac{\partial\psi_{N}}{\partial Q}\rightarrow \frac{\partial\psi}{\partial Q}$ in $L^{\infty}_{\mathrm{loc}}(\mathsf{D}(\psi))$ as $N\rightarrow\infty$; \vspace{1mm}
\item[\textbf{(M6)}] The regularised map $\psi_{N}$ satisfies
\begin{equation*}
c_{N}^{1}|X|- c_{N}^{2}\leq \left|\frac{\partial\psi_{N}}{\partial Q}(X)\right|\leq C_{N}^{1}|X|+C_{N}^{2}
\end{equation*}
for all $X\in \mathbb{R}^{d\times d}$ and positive constants $c_{N}^{i}$ and $C_{N}^{i}$ which depend on the mollification parameter $N\geq 1$.
\end{itemize}
\end{thm}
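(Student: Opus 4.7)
The plan is to chase each of the six properties through the two-stage construction $\psi_{N}=\psi_{N,N}$, where $\psi_{J}$ is the Yosida-Moreau envelope \eqref{yosidamoreau} (with $\psi$ extended by $+\infty$ off $\symn$) and $\psi_{J,K}$ is the standard mollification \eqref{mollification}. For \textbf{(M1)}, smoothness of $\psi_{N}$ follows from the convolution structure in \eqref{mollification}, which transfers the regularity of the kernel $\Phi$; convexity is preserved at both stages, since infimal convolution of a convex function with the convex quadratic $J|\cdot|^{2}$ yields a convex function, and mollification against the nonnegative unit-mass kernel $\Phi$ is a convex combination of translates of a convex function. For \textbf{(M2)}, the lower bound $-\psi_{0}$ from \textbf{(P2)} survives each step: at the Moreau minimiser $A^{\ast}$, one has $\psi_{J}(Q)\geq \psi(A^{\ast})\geq -\psi_{0}$, and integrating against the probability density $\Phi$ preserves this bound. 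For \textbf{(M6)}, the Moreau gradient has the explicit form $\partial\psi_{J}/\partial Q=2J(Q-A^{\ast}(Q))$ with $1$-Lipschitz resolvent $A^{\ast}$, giving affine growth in $Q$; the constants propagate through convolution with $\Phi$ to yield the stated $N$-dependent bounds.

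For \textbf{(M3)}, \textbf{(M4)}, and \textbf{(M5)} — the convergence statements — one invokes standard convex analysis. The Yosida-Moreau family $\psi_{J}$ is monotone non-decreasing in $J$ and converges pointwise to $\psi$ on $\symn$ as $J\to\infty$ (classical, e.g.\ Rockafellar \cite{rockafellar1997convex}). On the open convex set $\mathsf{D}(\psi)$, continuity of $\psi$ from \textbf{(P1)} upgrades pointwise convergence to local uniform convergence. The derivative convergence \textbf{(M5)} follows from the general fact that local uniform convergence of convex functions on an open set forces local uniform convergence of their gradients at points of differentiability. For the uniform divergence on $\symn\setminus\mathsf{D}(\psi)$ in \textbf{(M4)}, one uses the logarithmic blow-up \textbf{(P3)}: any competitor $A\in\mathsf{D}(\psi)$ with $|A-Q|$ small must carry $\psi(A)$ of order $-\log\mathrm{dist}(A,\partial\mathsf{D}(\psi))$, and balancing this against $J|A-Q|^{2}$ in the infimum forces $\psi_{J}(Q)\to\infty$ at a rate independent of the particular $Q$ in the complement.

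The diagonal monotonicity in \textbf{(M3)} is the most delicate point. The Yosida-Moreau step is monotone in $J$, but mollification of a convex function is pointwise \emph{larger} than the function by Jensen's inequality, so the two indices compete. One must calibrate the rates so that the monotone jump $\psi_{J+1}-\psi_{J}$ dominates the change in the mollification error as $K$ advances to $K+1$, which can be achieved by quantitative estimates on the modulus of convexity of $\psi_{J}$ near the Moreau minimiser. Combined with the fact that $\psi_{J,K}\to\psi_{J}$ as $K\to\infty$ and $\psi_{J}\to\psi$ as $J\to\infty$, this yields a diagonal sequence satisfying \textbf{(M3)}. This programme is carried out in detail by Feireisl et al.\ \cite{feireisl}, from whom the properties \textbf{(M1)}--\textbf{(M6)} are here quoted.

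The main obstacle I would anticipate is the simultaneous control required in \textbf{(M3)} and \textbf{(M4)}: the diagonal choice $\psi_{N}=\psi_{N,N}$ must be both non-decreasing in $N$ and uniformly divergent on $\symn\setminus\mathsf{D}(\psi)$, necessitating interlocking quantitative estimates on the Moreau resolvent $A^{\ast}(Q)$ near $\partial\mathsf{D}(\psi)$ and on the mollification error. Everything else reduces to routine verification in convex analysis once the correct diagonal sequence has been identified.
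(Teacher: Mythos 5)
The paper offers no proof of this theorem at all: properties \textbf{(M1)}--\textbf{(M6)} are simply quoted from Feireisl et al.\ \cite{feireisl}, so your reconstruction necessarily takes a different (more explicit) route, and it is a sound one. Your sketch correctly identifies the mechanisms: convexity and the lower bound $-\psi_{0}$ survive both the Moreau--Yosida step and averaging against the probability kernel $\Phi$, smoothness comes from the convolution, and the gradient formula $\partial\psi_{J}/\partial Q=2J\bigl(Q-A^{\ast}(Q)\bigr)$ with nonexpansive resolvent gives the upper bound in \textbf{(M6)}; note, though, that the \emph{lower} linear bound in \textbf{(M6)} rests on the fact that $A^{\ast}(Q)$ takes values in the bounded set $\overline{\mathsf{D}(\psi)}$ (physical tensors have eigenvalues in a bounded interval), so $|2J(Q-A^{\ast}(Q))|\geq 2J|Q|-C_{J}$ --- worth saying explicitly, since for a general singular convex potential with unbounded effective domain no such bound holds. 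Your use of \textbf{(P3)} for the uniform divergence in \textbf{(M4)} and of local uniform convergence of convex functions plus their gradients for \textbf{(M5)} is the standard convex-analysis argument one would expect behind the citation. You are also right that the only genuinely delicate point is the diagonal monotonicity \textbf{(M3)}: mollification pushes a convex function up while the Moreau index pushes it up towards $\psi$ from below, so the two indices must be calibrated (and one must check $\psi_{N,N}\leq\psi$ near $\partial\mathsf{D}(\psi)$, where the mollification error competes with the blow-up of $\psi$); flagging this and deferring the quantitative bookkeeping to \cite{feireisl} is exactly the honest resolution, and it matches what the paper itself does wholesale. In short, the paper buys brevity by citation, while your sketch makes visible which structural facts about $\psi$ (convexity, lower bound, bounded effective domain, logarithmic blow-up) each property actually uses.
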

%\begin{proof}
%Properties \textbf{(M1)} to \textbf{(M4)} follow from the definitions of the regularisations \eqref{yosidamoreau} and \eqref{mollification} above in a straightforward manner. Let us comment on the properties \textbf{(M5)} and \textbf{(M6)}.
%\end{proof}

\section{A Priori Estimates for the System (S)}\label{secthree} 

As is customary, we procede in a formal manner to obtain useful \emph{a priori} estimates associated with the system (S). The following manipulations hold only for smooth maps $Q$ and $u$, however the resulting energy estimates associated with (S) do indeed hold for suitable approximants $Q^{(\beta)}$ and $u^{(\beta)}$ in Section \ref{secfour} to come. Moreover, for convenience we replace $\psi$ with a convex map $\psi_{\ast}:\symn\rightarrow[-\psi_{0}, \infty)$ satisfying properties {\bf (M1)} to {\bf (M6)} above. Consequently, we shall have {\em no} uniform $L^{\infty}$ information on maps $Q^{(\beta)}$ in space. We recover such information in the limit: see Section \ref{rangeofthemap} below.
\subsection{An Energy Identity for the System (S)}\label{enid}
Firstly, considering the evolution equation for the Q-tensor field, 
\begin{equation*}
\frac{\partial Q}{\partial t} + (u\cdot\nabla)Q-S(Q, \nabla u)= \Gamma\left(L\Delta Q- \theta\left\langle\frac{\partial\psi_{\ast}}{\partial Q}(Q)\right\rangle+\kappa\,Q\right)
\end{equation*}
and taking $L^{2}$ inner products throughout against $-H$, where
\begin{equation*}
H=L\Delta Q - \theta\left\langle\frac{\partial\psi_{\ast}}{\partial Q}(Q)\right\rangle+\kappa\,Q,
\end{equation*}
one finds after integrating by parts that
\begin{align}\label{qlong}
& \frac{d}{dt}\left(\frac{L}{2}\|\nabla Q\|_{2}^{2}+\theta\iid \psi_{\ast}(Q)-\frac{\kappa}{2}\|Q\|_{2}^{2}\right) +\Gamma\left\|L\Delta Q-\theta\left\langle\frac{\partial\psi_{\ast}}{\partial Q}(Q)\right\rangle+\kappa\,Q\right\|_{2}^{2} \notag \\ = & \, \underbrace{L\iid (u\cdot\nabla)Q:\Delta Q\,dx}_{\mathcal{T}_{1}^{+}} +\underbrace{\iid\left(\omega_{0}Q - Q\omega_{0}\right):\left(\theta\left\langle\frac{\partial\psi_{\ast}}{\partial Q}(Q)\right\rangle-\kappa\,Q\right)\,dx}_{\mathcal{I}}\notag \\ & -\underbrace{L\iid(\omega_{0}Q-Q\omega_{0}):\Delta Q\,dx}_{\mathcal{T}_{2}^{+}} - \underbrace{L\xi\iid \left(\omega_{1}\left(Q+\frac{1}{d}I\right)+\left(Q+\frac{1}{d}I\right)\omega_{1}\right):\Delta Q\,dx}_{\mathcal{T}_{3}^{+}} \notag \\ & +\underbrace{2L\xi\iid \mathrm{tr}\left[Q\nabla u\right]\left(Q+\frac{1}{d}I\right):\Delta Q\,dx}_{\mathcal{T}_{4}^{+}}\notag \\ & +\underbrace{\xi\iid \left(\omega_{1}\left(Q+\frac{1}{d}I\right)+\left(Q+\frac{1}{d}I\right)\omega_{1}\right):\left(\theta\left\langle\frac{\partial\psi_{\ast}}{\partial Q}(Q)\right\rangle-\kappa\,Q\right)\,dx}_{\mathcal{T}_{5}^{+}}\notag \\ & -\underbrace{2\xi\iid \left(Q+\frac{1}{d}I\right):\left(\theta\left\langle\frac{\partial\psi_{\ast}}{\partial Q}(Q)\right\rangle-\kappa\,Q\right)\mathrm{tr}\left[Q\nabla u\right]\,dx}_{\mathcal{T}_{6}^{+}}.
\end{align}
Secondly, considering the evolution of the velocity profile
\begin{equation*}
\frac{\partial u}{\partial t}+(u\cdot\nabla)u-\nu\Delta u + \nabla p = \mathrm{div}\left(\tau+\sigma\right)
\end{equation*}
and this time taking inner products in $L^{2}_{\mathrm{div}}$ throughout the equation against $u$, one obtains in a similar way by parts
\begin{align}\label{ulong}
& \quad \frac{1}{2}\frac{d}{dt}\left(\|u\|_{2}^{2}\right)+\nu\|\nabla u\|_{2}^{2} \notag \\ = & \quad \, \underbrace{L\iid \partial_{i} Q_{j\ell}\partial_{k}Q_{\ell j}\partial_{k}u_{i}\,dx}_{\mathcal{T}_{1}^{-}} - \underbrace{L\iid \left(Q\Delta Q-\Delta Q\,Q\right):\nabla u\,dx}_{\mathcal{T}_{2}^{-}} \notag \\ & \quad +\underbrace{L\xi\iid\left(\left(Q+\frac{1}{d}I\right)\Delta Q+ \Delta Q\left(Q+\frac{1}{d}I\right)\right):\nabla u\,dx}_{\mathcal{T}_{3}^{-}} \notag \\ & \quad -\underbrace{2L\xi\iid \mathrm{tr}\left[Q\Delta Q\right]\left(Q-\frac{1}{d}I\right):\nabla u\,dx}_{\mathcal{T}_{4}^{-}} \notag \\ & \quad -\underbrace{\xi\iid \left(\left(Q+\frac{1}{d}I\right)\left(\theta\left\langle\frac{\partial\psi_{\ast}}{\partial Q}(Q)\right\rangle-\kappa\,Q\right)+\left(\theta\left\langle\frac{\partial\psi_{\ast}}{\partial Q}(Q)\right\rangle-\kappa\,Q\right)\left(Q+\frac{1}{d}I\right)\right):\nabla u\,dx}_{\mathcal{T}_{5}^{-}} \notag \\ & \quad \, +\underbrace{2\xi\int_{\id}\tr\left[Q\left(\theta\left\langle\frac{\partial\psi_{\ast}}{\partial Q}(Q)\right\rangle-\kappa\,Q\right)\right]\left(Q+\frac{1}{d}I\right):\nabla u\,dx}_{\mathcal{T}_{6}^{-}}\notag \\ & \quad +\underbrace{L\iid \left(Q\left(\theta\left\langle\frac{\partial\psi_{\ast}}{\partial Q}(Q)\right\rangle-\kappa\,Q\right)-\left(\theta\left\langle\frac{\partial\psi_{\ast}}{\partial Q}(Q)\right\rangle-\kappa\,Q\right)Q\right):\nabla u\,dx.}_{\mathcal{J}}
\end{align}
Let us write $\mathcal{E}:H^{1}\times L^{2}_{\mathrm{div}}\rightarrow\mathbb{R}\cup\{\infty\}$ to denote the functional
\begin{equation*}
\mathcal{E}(Q, u):=\frac{L}{2}\|\nabla Q\|_{2}^{2}+\theta\iid \psi_{\ast}(Q)\,dx-\frac{\kappa}{2}\|Q\|_{2}^{2}+\frac{1}{2}\|u\|_{2}^{2}.
\end{equation*}
By adding the contributions of \eqref{qlong} and \eqref{ulong} together and noting the cancellations $\mathcal{T}^{+}_{j}+\mathcal{T}^{-}_{j}=0$ for $j=1, ..., 6$ together with the null terms $\mathcal{I}=\mathcal{J}=0$, one discovers
\begin{align}\label{lya}
\frac{d\mathcal{E}}{dt}=- \Gamma\left\|L\Delta Q-\theta\left\langle\frac{\partial\psi_{\ast}}{\partial Q}(Q)\right\rangle+\kappa\,Q\right\|_{2}^{2}-\nu\|\nabla u\|_{2}^{2},
\end{align}
which implies that the functional $\mathcal{E}$ is non-increasing along solution trajectories $(Q(t), u(t))$ in $H^{1}\times L^{2}_{\mathrm{div}}$.

At this point, we have paid a price for the cancellations of higher derivative terms (like $\mathcal{T}^{+}_{1}$ and $\mathcal{T}^{-}_{1}$, for example) which would otherwise complicate our subsequent analysis: the functional $\mathcal{E}$ is not of one sign due to the negative contribution of the 2-norm of the tensor field. This would cause a problem in the sequel if we wish to use $\mathcal{E}$ to obtain bounds on $Q$ and $u$ and their distributional gradients in natural function spaces which are uniform in approximation parameters. 

This issue, however, is remedied as follows. Suppose now that the smooth maps $Q=Q^{(\beta)}$ and $u=u^{(\beta)}$ depend on an approximation parameter $\beta\in\{1, 2, 3, ...\}$, but that the initial data $Q_{0}\in H^{1}$ and $u_{0}\in L^{2}_{\mathrm{div}}$ are independent of $\beta$. Equality \eqref{lya} above implies the simple inequality
\begin{equation*}
\frac{d}{dt}\big(\mathcal{E}(Q^{(\beta)}(t), u^{(\beta)}(t))\big)\leq 0,
\end{equation*}
from which we deduce by integration in time,
\begin{align}\label{firststep}
\quad & \frac{L}{2}\|\nabla Q^{(\beta)}(\cdot, t)\|_{2}^{2}+\theta\iid \psi_{\ast}(Q^{(\beta)}(x, t))\,dx-\frac{\kappa}{2}\|Q^{(\beta)}(\cdot, t)\|_{2}^{2}+\frac{1}{2}\|u^{(\beta)}(\cdot, t)\|_{2}^{2} \notag \\ \leq \quad & \frac{L}{2}\|\nabla Q_{0}\|_{2}^{2}+\theta\iid \psi_{\ast}(Q_{0}(x))\,dx-\frac{\kappa}{2}\|Q_{0}\|_{2}^{2}+\|u_{0}\|_{2}^{2}.
\end{align}
Now, noting that $\psi_{\ast}$ satisfies the property \textbf{(M2)}, which implies that
\begin{equation*}
-\theta\left(\Lambda\pi\right)^{d}\psi_{0}\leq \theta\iid\psi_{\ast}(Q^{(\beta)}(x, t))\,dx,
\end{equation*}
and \emph{if} we have in addition that
\begin{equation}\label{importantltwobound}
\sup_{t}\|Q^{(\beta)}(\cdot, t)\|_{2}\leq c_{0},
\end{equation}
where $c_{0}>0$ is some constant independent of $\beta$, we may deduce from \eqref{firststep} using such bounds that 
\begin{equation}
\sup_{t}\left(\frac{L}{2}\|\nabla Q^{(\beta)}(\cdot, t)\|_{2}^{2}+\frac{1}{2}\|u^{(\beta)}(\cdot, t)\|_{2}^{2}\right)\leq C_{0},
\end{equation}
where
\begin{equation*}
C_{0}:=\frac{L}{2}\|\nabla Q_{0}\|_{2}^{2}+\frac{1}{2}\|u_{0}\|_{2}^{2}+\theta\iid\underbrace{\left(\psi_{0}+\psi_{\ast}(Q_{0}(x)\right)}_{\geq 0}\,dx+c_{0}\geq 0.
\end{equation*}
Thus, the existence of the functional $\mathcal{E}$ along with the additional stipulation that $\{Q^{(\beta)}\}_{\beta=1}^{\infty}$ be uniformly bounded in $L^{\infty}_{t}L^{2}_{x}$ allow us to infer that
\begin{equation}\label{qinfo}
\{Q^{(\beta)}\}_{\beta=1}^{\infty} \quad \text{is uniformly bounded in} \quad L^{\infty}_{t}H^{1}_{x}
\end{equation}
and
\begin{equation}\label{uinfo}
\{u^{(\beta)}\}_{\beta=1}^{\infty} \quad \text{is uniformly bounded in} \quad L^{\infty}_{t}L^{2}_{\mathrm{div}}.
\end{equation}
We can use these uniform bounds to glean yet more information from the identity \eqref{lya}. Performing an expansion of the right-hand side, one finds
\begin{align}\label{gleanmore}
\quad & \frac{d}{dt}\big(\mathcal{E}(Q^{(\beta)}(t), u^{(\beta)}(t))\big)+\Gamma L^{2}\|\Delta Q^{(\beta)}\|_{2}^{2}+\Gamma\theta^{2}\left\|\left\langle\frac{\partial\psi_{\ast}}{\partial Q}(Q^{(\beta)})\right\rangle\right\|_{2}^{2}+\Gamma\kappa\|Q^{(\beta)}\|_{2}^{2} \notag\\ = \quad & -\nu\|\nabla u^{(\beta)}\|_{2}^{2}+ 2\Gamma L \kappa \|\nabla Q^{(\beta)}\|_{2}^{2} +2\Gamma L \theta\iid\Delta Q^{(\beta)}:\left\langle\frac{\partial\psi_{\ast}}{\partial Q}(Q^{(\beta)})\right\rangle\,dx\notag \\ & +2\Gamma \theta \kappa \iid \left\langle\frac{\partial\psi_{\ast}}{\partial Q}(Q^{(\beta)})\right\rangle: Q^{(\beta)}\,dx.
\end{align}
Now, by integration by parts, we obtain
\begin{equation*}
\iid \Delta Q^{(\beta)}:\frac{\partial\psi_{\ast}}{\partial Q}(Q^{(\beta)})\,dx=-\sum_{m=1}^{d}\iid\partial_{m} Q^{(\beta)}_{ij}\frac{\partial^{2}\psi_{\ast}(Q^{(\beta)})}{\partial Q_{ji}\partial Q_{\ell k}}\partial_{m} Q^{(\beta)}_{k\ell}\,dx.
\end{equation*}
Since the map $\psi_{\ast}$ is convex, its Hessian is positive definite: see, for instance, \textsc{Rockafellar} (\cite{rockafellar1997convex} Chapter 4, Theorem 4.5). Using the convexity of $\psi$, one may deduce from the above that
\begin{equation*}
2\Gamma L\theta \iid \Delta Q^{(\beta)}:\frac{\partial\psi_{\ast}}{\partial Q}(Q^{(\beta)})\,dx \leq 0.
\end{equation*}
By Young's inequality, one may also obtain the simple estimate
\begin{equation*}
2\Gamma \theta \kappa \iid\left\langle\frac{\partial\psi_{\ast}}{\partial Q}(Q^{(\beta)})\right\rangle: Q^{(\beta)}\,dx \leq \frac{\Gamma\theta^{2}}{2}\left\|\left\langle\frac{\partial\psi_{\ast}}{\partial Q}(Q^{(\beta)})\right\rangle\right\|_{2}^{2} + 2\Gamma\kappa^{2}\|Q^{(\beta)}\|_{2}^{2}.
\end{equation*}
Using these observations, integrating in time across equality \eqref{gleanmore} yields:
\begin{equation}\label{qappme}
\{Q^{(\beta)}\}_{\beta=1}^{\infty} \quad \text{is uniformly bounded in} \quad L^{\infty}_{t} H^{1}_{x} \quad \text{and} \quad L^{2}_{t}H^{2}_{x},
\end{equation}
\begin{equation}\label{uappyou}
\{u^{(\beta)}\}_{\beta=1}^{\infty} \quad \text{is uniformly bounded in} \quad L^{\infty}_{t} L^{2}_{\mathrm{div}}\quad \text{and} \quad L^{2}_{t}H^{1}_{\mathrm{div}},
\end{equation}
and
\begin{equation}\label{psiappher}
\left\{\left\langle\frac{\partial\psi_{\ast}}{\partial Q}(Q^{(\beta)})\right\rangle\right\}_{\beta=1}^{\infty} \quad \text{is uniformly bounded in} \quad L^{2}_{t} L^{2}_{x}. 
\end{equation}
Thus, the moral of this section is that if we can establish identity \eqref{lya} rigorously, along with $L^{\infty}_{t}L^{2}_{x}$-bounds on $\{Q^{(\beta)}\}_{\beta=1}^{\infty}$ and the `convexity' inequality
\begin{equation}\label{convexity}
2\Gamma L\theta \iid \Delta Q^{(\beta)}:\frac{\partial\psi_{\ast}}{\partial Q}(Q^{(\beta)})\,dx \leq 0,
\end{equation}
we are immediately in the familiar setting in which we aim to employ weak compactness arguments to identify candidate solutions for a limiting system.

\subsection{An Estimate for Higher Regularity of Solutions}\label{highreg}
The property of strict physicality becomes of importance when investigating higher regularity of weak solutions of (S). In particular, strict physicality of weak solutions $Q(\cdot, t)$ for $t>0$ implies that $\psi(Q(\cdot, t))$ is as distributionally differentiable as $Q(\cdot, t)$, since the range of the tensor field belongs to a fixed compact subset of $\symn$. 

In what follows, we only consider the case of two spatial dimensions. By noting convenient cancellations in the system (S), knowledge that $\psi(Q(\cdot, t))\in L^{\infty}$ for $t\geq 0$ allows us to prove $Q\in L^{\infty}(0, T; H^{2})\cap L^{2}(0, T; H^{3})$ and $u\in L^{\infty}(0, T; H^{1}_{\mathrm{div}})\cap L^{2}(0, T; H^{2}_{\mathrm{div}})$ for any $T>0$ whenever initial data $(Q_{0}, u_{0})$ lie in the smaller class $H^{2}\times H^{1}_{\mathrm{div}}$ with $Q_{0}$ satsifying $\psi(Q_{0})\in L^{\infty}$.

Our remarks here are once more merely formal, but shall be made rigorous in Section \ref{higherreg}.  Consider the auxiliary functional $\mathcal{F}:H^{2}\times H^{1}_{\mathrm{div}}\rightarrow [0, \infty)$ defined by
\begin{equation*}
\mathcal{F}(Q, u):=\frac{1}{2}\int_{\id}|\nabla u|^{2}\,dx+\frac{L}{2}\int_{\id}|\Delta Q|^{2}\,dx.
\end{equation*}
We assume once again that the maps $Q^{(\beta)}$ and $u^{(\beta)}$ are smooth, with the additional stipulation that $Q^{(\beta)}(\cdot, t)$ be strictly physical on $\id$ for $t\geq 0$. By considering the time derivative of $\mathcal{F}^{(\beta)}(t):=\mathcal{F}(Q^{(\beta)}(\cdot, t), u^{(\beta)}(\cdot, t))$ and noting the identity
\begin{displaymath}
\begin{array}{c}
\displaystyle L\int_{\id}\Delta u^{(\beta)}_{i}\frac{\partial}{\partial x_{j}}\left(\frac{\partial Q^{(\beta)}_{mn}}{\partial x_{i}}\frac{\partial Q^{(\beta)}_{nm}}{\partial x_{j}}\right)\,dx-L\int_{\id}\Delta Q^{(\beta)}: \Delta\left((u^{(\beta)}\cdot\nabla)Q^{(\beta)}\right)\,dx \vspace{2mm}\\ \displaystyle = 2L\int_{\id}\frac{\partial}{\partial x_{k}}\left(\Delta Q^{(\beta)}\right):(u^{(\beta)}\cdot\nabla)\frac{\partial Q^{(\beta)}}{\partial x_{k}}\,dx, 
\end{array}
\end{displaymath}
one can show that
\begin{displaymath}
\begin{array}{c}
\displaystyle \frac{d\mathcal{F}^{(\beta)}}{dt}+\nu\int_{\id}|\Delta u^{(\beta)}|^{2}\,dx +\Gamma L^{2}\int_{\id}|\nabla \Delta Q^{(\beta)}|^{2}\,dx \vspace{2mm} \\ \displaystyle = \int_{\id}(u^{(\beta)}\cdot\nabla)u^{(\beta)}\cdot\Delta u^{(\beta)}\,dx +\Gamma L\kappa\int_{\id}|\nabla Q^{(\beta)}|^{2}\,dx\vspace{2mm} \\ \displaystyle + 2L\int_{\id}\frac{\partial}{\partial x_{k}}\left(\Delta Q^{(\beta)}\right):(u^{(\beta)}\cdot\nabla)\frac{\partial Q^{(\beta)}}{\partial x_{k}}\,dx -\Gamma L\theta\int_{\id}\Delta Q^{(\beta)}:\Delta\left(\left\langle\frac{\partial\psi}{\partial Q}(Q^{(\beta)})\right\rangle\right)\,dx.
\end{array}
\end{displaymath}
Now, under the assumption that $Q^{(\beta)}$ is strictly physical, it can be shown by means of Ladyzhenskaya's inequality that
\begin{equation}
\frac{d\mathcal{F}^{(\beta)}}{dt}(t)\leq C_{0}\mathcal{F}^{(\beta)}(t)^{2}+C_{1},
\end{equation}
where $C_{0}, C_{1}>0$ are independent of the approximation parameter $\beta$. Since $\mathcal{F}^{(\beta)}$ is uniformly bounded in $L^{1}(0, T)$ by \eqref{qappme} and \eqref{uappyou}, an application of Gronwall's inequality yields that the approximants $Q^{(\beta)}$ and $u^{(\beta)}$ are uniformly bounded in $L^{\infty}(0, T; H^{2})\cap L^{2}(0, T; H^{3})$ and $L^{\infty}(0, T; H^{1}_{\mathrm{div}})\cap L^{2}(0, T; H^{2}_{\mathrm{div}})$ for any $T>0$. Such improved uniform bounds give us better compactness and convergence properties when passing to weak solutions in the limit $\beta\rightarrow\infty$.

Now that we have made our initial remarks regarding the formal structure of the system (S), we embark upon a construction of weak solutions.

\section{Existence of Weak Solutions}\label{secfour}

We posit a family of coupled systems which may be considered as approximants to the coupled system (S). Weak compactness arguments identify candidate solutions to the `limit' system (S), which satisfy the coupled system distributionally in the limit by \emph{strong} compactness of the set of approximate solutions. 

Stage I to Stage IV below should be considered as a schematic for the proof of \textsc{Theorem} \ref{theoremone} above. The rest of the details may be sourced from texts such as \textsc{Lions} \cite{lions1} or \textsc{Robinson} \cite{robinson1}.
\begin{remark}
For simplicity, we \emph{only} consider the case $\xi=0$ as the approximation scheme we adopt in this case is compatible with the \emph{strict physicality} argument in Stage III below. 
%\textcolor{red}{If one wishes to establish existence of weak solutions for the co-rotational case $\xi\in \mathbb{R}\setminus\{0\}$, one should employ the argument of Stage I below without initially regularising $\psi$, as then one instantly has $L^{\infty}_{t}L^{\infty}_{x}$ bounds on the approximate tensor field. This allows one to avail oneself immediately of the function space bounds \eqref{qappme}, \eqref{uappyou} and \eqref{psiappher}.} 
We pass comment on the reason our comparison principle argument is not suitable for the non co-rotational case in due course.
\end{remark}
\section*{Stage I: Regularised Potential $\psi_{N}$ and Finite-dimensional Velocity Field}\label{stageone}
Let us set out the basic objects which which we employ in this stage. For $\mu\in\mathbb{R}$, consider the Stokes operator eigenfunction problem on $I^{d}=[-\Lambda\pi/2, \Lambda\pi/2]^{d}$ given by
\begin{displaymath}
\left\{
\begin{array}{l}
\displaystyle -\nu\Delta \phi(x)+\nabla p(x)=\mu\phi(x), \vspace{2mm} \\ \nabla\cdot\phi(x)=0, \\
\end{array}
\right.
\end{displaymath}
supplemented with periodic boundary conditions on $I^{d}$. It is known (see, for example \textsc{Temam} \cite{temam1}) that there exists a countably-infinite family of eigenfunctions $\phi_{i, j}$ with corresponding eigenvalues $\mu_{i, j}:=4\pi^{2}\nu |j|^{2}\Lambda^{2}$ of multiplicity $m(i)$ for indices $i, j\geq 1$. We enumerate this countable family of eigenfunctions and eigenvalues by $\{\phi_{m}\}_{m=1}^{\infty}$ and $\{\mu_{m}\}_{m=1}^{\infty}$, respectively. It is also well known that the family $\{\phi_{m}\}_{m=1}^{\infty}$ constitutes an orthonormal basis for $L^{2}_{\mathrm{div}}$ and an orthogonal basis for $H^{1}_{\mathrm{div}}$. Define $H_{M}:=\mathrm{span}\{\phi_{j}\}_{j=1}^{M}$ and the associated orthogonal projection operator $\mathds{P}_{M}:L^{2}_{\mathrm{div}}\rightarrow H_{M}$ by $\mathds{P}_{M}u:=\sum_{k=1}^{M}\left(u, \phi_{k}\right)_{2}\phi_{k}$ for $u\in L^{2}_{\mathrm{div}}$. 

We now state the main result of this stage.
\begin{prop}\label{appsoln}
For any $u_{0}\in L^{2}_{\mathrm{div}}$, $Q_{0}\in H^{1}$ satisfying $\psi(Q_{0})\in L^{1}$ and any fixed positive integer $M\geq 1$, there exist maps $Q^{(M, N)}\in C^{\infty}(\id\times(0, \infty); \symn)$ and $u^{(M, N)}\in C^{1}((0, \infty); H_{M})$ satisfying the system (S$_{M}$) given by
\begin{displaymath}
\left\{
\begin{array}{ll}
 & \displaystyle \partial_{t}Q^{(M, N)}+(u^{(M, N)}\cdot\nabla)Q^{(M, N)}-S(Q^{(M, N)}, \nabla u^{(M, N)}) \vspace{2mm} \\ = & \displaystyle\Gamma\left(L\Delta Q^{(M, N)}-\theta\left\langle\frac{\partial\psi_{N}}{\partial Q}(Q^{(M, N)})\right\rangle+\kappa \,Q^{(M, N)}\right), \vspace{2mm}\\  & \displaystyle \partial_{t}u^{(M, N)}+(u^{(M, N)}\cdot\nabla)u^{(M, N)}+\nabla p^{(M, N)} = \nu\Delta u^{(M, N)}+\mathrm{div}\left(\tau(Q^{(M, N)})+\sigma(Q^{(M, N)})\right), \vspace{2mm}\\& \nabla\cdot u^{(M, N)}=0
\end{array}
\right.
\end{displaymath}
pointwise in $\symn$ and $\mathbb{R}^{d}$ respectively, for all $x\in\id$ and $t\in (0, \infty)$. 
\end{prop}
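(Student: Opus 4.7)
The plan is to build the pair $(Q^{(M,N)}, u^{(M,N)})$ by a Schauder fixed-point argument on the velocity variable alone, which is natural because $u^{(M,N)}$ is confined to the finite-dimensional space $H_M$ while the $Q$-equation admits standard parabolic regularity theory once the velocity is frozen. Since $\xi = 0$, the term $S$ collapses to the commutator $\omega_0 Q - Q\omega_0$, the stress $\tau$ reduces to the single contribution $-L\,\mathrm{tr}[\partial_i Q\,\partial_j Q]$, and $\sigma$ retains its commutator form, which substantially simplifies the coupling. For $T, R > 0$ to be determined, I would let
\begin{equation*}
\mathcal{K}_{T,R} := \bigl\{v \in C([0,T]; H_M) : \|v\|_{L^\infty_t L^2_x} \leq R\bigr\},
\end{equation*}
which is a closed convex subset of the Banach space $C([0,T]; H_M)$, and define a map $\Phi: \mathcal{K}_{T,R} \to C([0,T]; H_M)$ in the two sub-steps described below.

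In the first sub-step, given $v \in \mathcal{K}_{T,R}$, I solve for $Q = Q[v]$ the semilinear parabolic system obtained by substituting $v$ for $u$ in the $Q$-equation, namely
\begin{equation*}
\partial_t Q + (v\cdot\nabla)Q - \omega_0[v]Q + Q\omega_0[v] = \Gamma L\Delta Q - \Gamma\theta\bigl\langle \tfrac{\partial\psi_N}{\partial Q}(Q)\bigr\rangle + \Gamma\kappa Q,
\end{equation*}
with $Q(\cdot, 0) = Q_0$. Because $H_M$ consists of trigonometric polynomials, $v$ and $\nabla v$ are smooth in $x$ with all seminorms controlled by $\|v\|_{L^2_x}$; combined with the smoothness and linear-growth bounds for $\partial\psi_N/\partial Q$ from \textbf{(M1)} and \textbf{(M6)}, a standard Galerkin construction on a Fourier basis for $H^1$, together with \textbf{(M2)}, yields a global solution $Q \in L^\infty(0,T; H^1)\cap L^2(0,T; H^2)$ with $\partial_t Q \in L^2_t L^2_x$. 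In the second sub-step, with $Q = Q[v]$ in hand, I solve
\begin{equation*}
\dot u = \mathds{P}_M\bigl(\nu\Delta u - (u\cdot\nabla)u + \mathrm{div}(\tau(Q) + \sigma(Q))\bigr),\qquad u(0) = \mathds{P}_M u_0,
\end{equation*}
which is an ODE system for the $M$ Fourier coefficients of $u$; the forcing lies in $L^2(0,T; L^2)$ because $Q \in L^2_t H^2_x$, and the remaining dependence on $u$ is locally Lipschitz in finite dimensions, so Cauchy-Lipschitz provides $u \in C^1([0,T]; H_M)$. I then set $\Phi(v) := u$.

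To close the argument I would verify stability $\Phi(\mathcal{K}_{T,R}) \subseteq \mathcal{K}_{T,R}$ for suitable $T$ and $R$ via the energy identity \eqref{lya} of Section \ref{enid} (whose cancellations $\mathcal{T}_j^+ + \mathcal{T}_j^- = 0$ arise from pointwise algebra and therefore survive the decoupling $v \mapsto Q \mapsto u$, provided an $L^\infty_t L^2_x$ bound on $Q$ is obtained by testing the $Q$-equation against $Q$ and using \textbf{(M6)}); precompactness of $\Phi(\mathcal{K}_{T,R})$ in $C([0,T]; H_M)$ via Arzel\`a-Ascoli on the finite-dimensional target $H_M$ from the uniform $C^1_t$-bound on $u$; and continuity of $\Phi$ in the $C_t H_M$ topology, obtained by Gronwall applied to the equations for the differences $Q[v_n] - Q[v]$ and $u[v_n] - u[v]$. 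Schauder's theorem will then produce a fixed point, and interior smoothness of $Q^{(M,N)}$ and $u^{(M,N)}$ will follow by bootstrapping parabolic regularity against the already smooth-in-$x$ velocity and using \textbf{(M1)}. The hard part will be the continuity step: since $\tau$ involves $\mathrm{tr}[\partial_i Q\,\partial_j Q]$, which is quadratic in $\nabla Q$, while $\sigma$ and $S$ couple $Q$ with $\nabla u$ and $\Delta Q$, one needs a quantitative $L^\infty_t H^1_x \cap L^2_t H^2_x$ stability estimate for $v \mapsto Q[v]$, which requires controlling $\partial\psi_N/\partial Q(Q[v_n]) - \partial\psi_N/\partial Q(Q[v])$ by the local Lipschitz constant of $\partial\psi_N/\partial Q$ on bounded sets (allowed to depend on $N$, which is acceptable since $N$ is fixed at this stage) and invoking the embedding $H^2 \hookrightarrow L^\infty$ in $d \leq 3$ to absorb the quadratic stress contributions before closing a Gronwall loop.
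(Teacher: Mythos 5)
Your overall plan --- a Schauder fixed point on the finite-dimensional velocity, with the parabolic $Q$-subproblem solved first and the Galerkin ODE for $u$ second --- is the same type of argument the paper gestures at by citing Lin and Liu \cite{linliu1}. Since the paper itself gives no details, comparison is possible only at the level of strategy; on that score you are on the right track.

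There is, however, a genuine flaw in your stability step. You assert that the cancellations $\mathcal{T}^{+}_{j}+\mathcal{T}^{-}_{j}=0$ underlying \eqref{lya} ``arise from pointwise algebra and therefore survive the decoupling $v\mapsto Q\mapsto u$.'' This is not so. Inspecting \eqref{qlong} and \eqref{ulong}, each $\mathcal{T}^{+}_{j}$ involves the velocity through $\omega_{0}$, $\nabla u$ and the convective term, while each $\mathcal{T}^{-}_{j}$ involves the tensor field through the stresses $\tau$ and $\sigma$, and the two cancel only because the \emph{same} pair $(Q, u)$ appears in both equations. In your decoupled map the $Q$-equation carries the input $v$ whereas the $u$-ODE carries the output $u=\Phi(v)$, so $\mathcal{T}^{+}_{j}$ is built from $v$ and $\mathcal{T}^{-}_{j}$ from $\Phi(v)$; the sum does not vanish until you reach the fixed point. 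Consequently \eqref{lya} cannot be used to show $\Phi(\mathcal{K}_{T,R})\subseteq\mathcal{K}_{T,R}$. The fix is to estimate $\Phi$ directly without any cancellation: since $v$ takes values in the finite-dimensional $H_{M}$ one has $\|\nabla v\|_{\infty}\leq C(M)\|v\|_{2}$, so testing the $Q$-equation against $Q$ (or, better, against $\kappa Q+\theta\bigl\langle\frac{\partial\psi_{N}}{\partial Q}(Q)\bigr\rangle$ as in Stage II) and using \textbf{(M2)}, \textbf{(M6)} and Gronwall closes the $Q$-energy on its own, and the resulting $L^{\infty}_{t}H^{1}_{x}\cap L^{2}_{t}H^{2}_{x}$ bound on $Q[v]$ feeds the finite-dimensional ODE for $u$, where one chooses $T$ small relative to $R$. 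This delivers only local existence on $[0,T]$, yet the proposition asserts existence on all of $(0,\infty)$; you therefore also need an explicit continuation step. At the fixed point the pair $(Q^{(M,N)}, u^{(M,N)})$ \emph{does} satisfy \eqref{lya} with $\psi_{\ast}=\psi_{N}$, and those estimates, together with the $L^{\infty}_{t}L^{2}_{x}$ bound on $Q^{(M,N)}$ coming from \textbf{(M2)}--\textbf{(M3)} as in \eqref{maamaa}, furnish bounds uniform on bounded time intervals and so extend the solution globally.
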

\begin{proof} The result follows from a rather involved Banach and Schauder fixed-point argument, whose proof we omit. The basic form of the argument can be found in \textsc{Lin and Liu} \cite{linliu1}.
\end{proof}

\section*{Stage II: Passing to the Limit $N\rightarrow\infty$}
We now aim to show that our approximants $Q^{(M, N)}$ and $u^{(M, N)}$ are uniformly bounded in $N$ in natural function spaces associated with the \emph{a priori} estimates of Section \ref{secthree}. By compactness methods (such as those of Banach-Alaoglu or Aubin-Lions), we may then identify candidate maps which are solutions of a limiting system of equations ($\mathrm{S}_{M}$) as the parameter $N\rightarrow\infty$. Throughout this stage, $M\geq 1$ is chosen arbitrarily at the start and remains fixed.
\subsection{Uniform Bounds}
In order to carry out the program sketched in Section \ref{secthree}, we need only demonstrate that $\{Q^{(M, N)}\}_{N=1}^{\infty}$ is uniformly bounded in $L^{\infty}_{t}L^{2}_{x}$, since our approximants are sufficiently regular that both the identity \eqref{lya} holds and
\begin{equation*}
\int_{\id}\Delta Q^{(M, N)}:\frac{\partial\psi_{N}}{\partial Q}(Q^{(M, N)})\,dx = -\sum_{m=1}^{d}\int_{\id}\partial_{m}Q^{(M, N)}_{ij}\frac{\partial^{2}\psi_{N}(Q^{(M, N)})}{\partial Q_{ji}\partial Q_{\ell k}}\partial_{m}Q^{(M, N)}_{k\ell}
\end{equation*}
by an application of classical integration by parts, since $\psi_{N}:\symn\rightarrow [-\psi_{0}, \infty)$ is smooth and convex by property \textbf{(M1)}.

We consider the evolution equation for $Q^{(M, N)}$ and recast it in distributional form, namely
\begin{align}
\quad & \int_{\id}\left(\frac{\partial Q^{(M, N)}}{\partial t}+(u^{(M, N)}\cdot\nabla)Q^{(M, N)}-S(Q^{(M, N)}, \nabla u^{(M, N)})\right):\chi\,dx \notag \\ = \quad & \Gamma\int_{\id}\left(L\Delta Q^{(M, N)}-\theta\left\langle\frac{\partial\psi_{N}}{\partial Q}(Q^{(M, N)})\right\rangle+\kappa\,Q^{(M, N)}\right):\chi\,dx,
\end{align}
for any $\chi\in L^{2}_{t}L^{2}_{x}$. Choosing $\chi$ to be the element
\begin{equation*}
\kappa\,Q^{(M, N)}+\theta\left\langle\frac{\partial\psi_{N}}{\partial Q}(Q^{(M, N)})\right\rangle,
\end{equation*}
one may derive the equality
\begin{align}\label{maamaa}
\quad & \frac{d}{dt}\left(\frac{\kappa}{2}\|Q^{(M, N)}\|_{2}^{2}+\theta\int_{\id}\psi_{N}(Q^{(M, N)})\,dx\right)+\Gamma L\|\nabla Q^{(M, N)}\|_{2}^{2}+\Gamma\theta^{2}\left\|\left\langle\frac{\partial\psi_{N}}{\partial Q}(Q^{(M, N)})\right\rangle\right\|_{2}^{2} \notag \\ = \quad & \Gamma L \theta\int_{\id}\Delta Q^{(M, N)}:\frac{\partial\psi_{N}}{\partial Q}(Q^{(M, N)})\,dx+\Gamma\kappa^{2}\|Q^{(M, N)}\|_{2}^{2}.
\end{align}
Now, since property \textbf{(M2)} of the mollified map $\psi_{N}$ implies that
\begin{equation*}
-\theta(\Lambda\pi)^{d}\psi_{0}\leq \theta\int_{\id}\psi_{N}(Q^{(M, N)})\,dx,
\end{equation*}
one may deduce from \eqref{maamaa} that
\begin{equation*}
\frac{d}{dt}\left[e^{-2\Gamma\kappa t}\left(\frac{\kappa}{2}\|Q^{(M, N)}\|_{2}^{2}+\theta\int_{\id}\psi_{N}(Q^{(M, N)})\,dx\right)\right]\leq -\theta(\Lambda\pi)^{d}\psi_{0}\frac{d}{dt}\left(e^{-2\Gamma\kappa t}\right) 
\end{equation*}
and since by property \textbf{(M3)}
\begin{equation*}
\theta\int_{\id}\psi_{N}(Q_{0})\,dx\leq \theta\int_{\id}\psi(Q_{0})\,dx,
\end{equation*}
we find that $\{Q^{(M, N)}\}_{N=1}^{\infty}$ is uniformly bounded in $L^{\infty}_{t}L^{2}_{x}$. Applying the reasoning of Section \ref{secthree}, we quickly deduce that
\begin{equation}\label{me}
\{Q^{(M, N)}\}_{N=1}^{\infty} \quad \text{is uniformly bounded in} \quad L^{\infty}_{t}H^{1}_{x} \quad \text{and} \quad L^{2}_{t}H^{2}_{x}, \vspace{2mm},
\end{equation}
\begin{equation}\label{you}
\{u^{(M, N)}\}_{N=1}^{\infty} \quad \text{is uniformly bounded in} \quad L^{\infty}_{t}L^{2}_{\mathrm{div}} \quad \text{and} \quad L^{2}_{t}H^{1}_{\mathrm{div}}
\end{equation}
and 
\begin{equation}\label{her}
\left\{\left\langle\frac{\partial\psi_{N}}{\partial Q}(Q^{(M, N)})\right\rangle\right\}_{N=1}^{\infty} \quad \text{is uniformly bounded in} \quad L^{2}_{t}L^{2}_{x}.
\end{equation}
By Banach-Alaoglu compactness, we may extract subsequences of \eqref{me}, \eqref{you} and \eqref{her} which converge weakly to limit points $Q^{(M)}$, $u^{(M)}$ and $Y^{(M)}$ in $L^{2}_{t}H^{1}_{x}$, $L^{2}_{t}L^{2}_{\mathrm{div}}$ and $L^{2}_{t}L^{2}_{x}$, respectively. 

We may in fact extract more information from the boundedness property of $\{u^{(M, N)}\}_{N=1}^{\infty}$ above. Firstly, by orthonormality of the family $\{\phi_{k}\}_{k=1}^{\infty}$ in $L^{2}_{\mathrm{div}}$, the 2-norm of $u^{(M, N)}(\cdot, t)$ is realised as
\begin{equation*}
\|u^{(M, N)}(\cdot, t)\|_{2}^{2}=\sum_{m=1}^{M}|c^{(M, N)}_{m}(t)|^{2},
\end{equation*}
a quantity which we know to be uniformly bounded in $N$. Using the fact that one may construct a norm which is equivalent to the standard norm on $H^{s}_{\mathrm{div}}$ using fractional powers of the Stokes operator, one may use the above observation, together with \eqref{you}, to show that
\begin{equation}\label{uss}
\{u^{(M, N)}\}_{N=1}^{\infty} \quad  \text{is uniformly bounded in} \quad C([0, T]; H^{s}_{\mathrm{div}}) \quad \text{for \emph{any}} \quad s\in \mathbb{R}.
\end{equation}
The uniform bounds of \eqref{me} and \eqref{uss} and allow us to show in turn directly from the $Q^{(M, N)}$-equation that 
\begin{equation*}
\left\{\frac{\partial Q^{(M, N)}}{\partial t}\right\}_{N=1}^{\infty} \quad \text{is uniformly bounded in} \quad L^{2}_{t}L^{2}_{x}
\end{equation*}
and due to the higher spatial derivatives of $Q^{(M, N)}$ in the forcing term of the velocity field equation, the weaker statement that
\begin{equation*}
\left\{\frac{\partial u^{(M, N)}}{\partial t}\right\}_{N=1}^{\infty}\quad \text{is uniformly bounded in} \quad L^{2}_{t}H^{-2}_{\mathrm{div}}.
\end{equation*}
One may verify that the weak limits of $\{\partial_{t}Q^{(M, N)}\}_{N=1}^{\infty}$ in $L^{2}_{t}L^{2}_{x}$ and $\{\partial_{t}u^{(M, N)}\}_{N=1}^{\infty}$ in $L^{2}_{t}H^{-2}_{\mathrm{div}}$ are $\partial_{t}Q^{(M)}$ and $\partial_{t}u^{(M)}$, respectively.
\subsection{Strengthening Convergence}\label{strengthen}
For the purposes of passing to a suitable limiting equation, we must now strengthen our notions of convergence of approximants. For the case of the tensor field, by virtue of the chain of compact embeddings $H^{2}\subset\subset H^{1}\subset\subset L^{2}$, Aubin-Lions compactness allows us to strengthen the weak convergence of $Q^{(M, N)}$ to its limit $Q^{(M)}$ to strong convergence in $L^{2}_{t}H^{1}_{x}$. For the case of the velocity field, since the time derivatives of the co-ordinates of $u^{(M, N)}$ satisfy
\begin{equation*}
c^{(M, N)}(t)= (u_{0}, \phi_{k})_{2}e_{k} + \int_{0}^{t}\frac{dc^{(M, N)}}{dt}(s)\,ds \quad \text{for} \quad 0\leq t\leq T,
\end{equation*}
one may show that $\{c^{(M, N)}\}_{N=1}^{\infty}$ constitutes an equi-continuous family of maps in the space $C([0, T]; \mathbb{R}^{M})$. By Arzel\`{a}-Ascoli compactness, we deduce that $c^{(M, N)}$ converges strongly to some $c^{(M)}$ in $C([0, T]; \mathbb{R}^{M})$, following which one may deduce that $u^{(M)}$ is equal to $c^{(M)}_{k}\phi_{k}$. Furthermore, $u^{(M, N)}$ converges strongly to $u^{(M)}$ in $C([0, T]; H^{s}_{\mathrm{div}})$ for any $s\in\mathbb{R}$. In particular, we have that $u^{(M, N)}\rightarrow u^{(M)}$ in $C([0, T]\times \id)$.
\subsection{The Range of the Map $(x, t)\mapsto Q^{(M)}(x, t)$}\label{rangeofthemap}
In order recover pointwise information on the weak limit $Y^{(M)}\in L^{2}_{t}L^{2}_{x}$, we must show that $Q^{(M)}(x, t)\in \mathsf{D}(\psi)$ almost everywhere. To this end, it is sufficient to show that
\begin{equation*}
\int_{\id}\psi(Q^{(M)}(x, t))\,dx<\infty
\end{equation*}
for almost every $0<t<T$, as $\psi(Q^{(M)}(\cdot, t))$ being integrable implies that $Q^{(M)}(x, t)\in\mathsf{D}(\psi)$ almost everywhere.

We fix $N_{0}\geq 1$, and suppose $N\geq N_{0}$. Since we know that $Q^{(M, N)}\rightarrow Q^{(M)}$ in $L^{2}_{t}H^{1}_{x}$ by section \ref{strengthen}, it follows by the smoothness property \textbf{(M1)} of the mollified map $\psi_{N_{0}}$ that $\psi_{N_{0}}(Q^{(M, N)}(x, t))\rightarrow \psi_{N_{0}}(Q^{(M)}(x, t))$ almost everywhere as $N\rightarrow\infty$. Recalling the uniform bound
\begin{equation*}
\int_{\id}\psi_{N}(Q^{(M, N)}(x, t))\,dx\leq C_{0},
\end{equation*}
for some constant $C_{0}>0$ independent of both $N$ and time, by Fatou's lemma and the monotonicity property \textbf{(M3)} we may deduce that
\begin{equation}
\int_{\id}\psi_{N_{0}}(Q^{(M)}(x, t))\,dx\leq C_{0},
\end{equation}
almost everywhere in time. An application of the monotone convergence theorem finally allows us to deduce that $Q^{(M)}(x, t)\in\mathsf{D}(\psi)$ almost everywhere.
\subsection{The Limiting System as $N\rightarrow \infty$}
We now consider passing to a limiting equation which the weak limits $Q^{(M)}$ and $u^{(M)}$ satisfy. It follows that since $Q^{(M, N)}(x, t)$ tends to an element of $\mathsf{D}(\psi)$ almost everywhere, 
\begin{equation*}
\frac{\partial\psi_{N}}{\partial Q}(Q^{(M, N)}(x, t))\longrightarrow\frac{\partial\psi}{\partial Q}(Q^{(M)}(x, t))\quad \text{almost everywhere on} \quad \id\times (0, T),
\end{equation*} 
by property {\bf (M5)}. By a dominated convergence argument, one may conclude that $Y_{M}=\partial_{Q}\psi(Q^{(M)})$ in $L^{2}_{t}L^{2}_{x}$.

%Equality \eqref{identify} then follows by an application of a weak variant of the Lebesgue dominated convergence theorem: see, for example, \textsc{Robinson} \cite{robinson1}.

Finally, passing to the limit $N\rightarrow\infty$ we find that the maps $Q^{(M)}\in L^{\infty}_{t}L^{\infty}_{x}\cap L^{\infty}_{t}H^{1}_{x}\cap L^{2}_{t}H^{2}_{x}$ and $u^{(M)}\in L^{\infty}_{t}L^{2}_{\mathrm{div}}\cap L^{2}_{t}H^{1}_{\mathrm{div}}$ satisfy the coupled system
\begin{displaymath}
(\mathrm{S}_{M}) \left\{
\begin{array}{c}
\displaystyle \frac{\partial Q^{(M)}}{\partial t}+(u^{(M)}\cdot\nabla)Q^{(M)}-S_{M} = \Gamma\left(L\Delta Q^{(M)}-\theta\left\langle\frac{\partial\psi}{\partial Q}(Q^{(M)})\right\rangle+\kappa\,Q^{(M)}\right)\vspace{2mm} \\ \displaystyle\frac{\partial u^{(M)}}{\partial t}+(u^{(M)}\cdot\nabla)u^{(M)}-\nu\Delta u^{(M)}+\nabla p = \mathrm{div}\left(\tau(Q^{(M)})+\sigma(Q^{(M)})\right)\vspace{2mm}\\ \nabla\cdot u^{(M)} = 0 
\end{array}
\right.
\end{displaymath}
in the distributional sense. One may also swiftly verify by standard methods that $Q^{(M)}\in C([0, T]; L^{2})$ and $u^{(M)}\in C([0, T]; (L^{2}_{\mathrm{div}})_{w})$.

\section*{Stage III: Strict Physicality of Approximants}
In the previous stage, we were able to demonstrate that $Q^{(M)}(x, t)\in\mathsf{D}(\psi)$ almost everywhere, which was a necessary step before passing to the limit system (S$_{M}$) above. We can in fact show more, namely that there exists $\delta>0$ which is \emph{independent} of $M\geq 1$ such that
\begin{equation}\label{stricteigen}
\delta-\frac{1}{d}\leq \lambda_{j}(Q^{(M)}(x, t))\leq 1-\frac{1}{d}-\delta, \hspace{2mm} \text{for almost every} \hspace{2mm} (x, t) \hspace{2mm} \text{and} \hspace{2mm} j=1, ..., d,
\end{equation} 
i.e. approximate solutions $Q^{(M)}$ are \emph{strictly physical} for almost every $t>0$. We shall obtain strict physicality of approximants $Q^{(M)}$ (and, in turn, of weak solutions in the limit as $M\rightarrow\infty$) by a suitable comparison principle argument. We apply the maximum principle at this stage in the construction, as weak solutions lack sufficient regularity for these methods to be applied.
\subsection{A Parabolic Inequality for $\psi_{N}(Q^{(M, N)})$}
We begin at the level of regularity obtained at the end of Stage I. We once more recast the evolution equation for $Q^{(M, N)}$ in distributional form, namely
\begin{displaymath}
\begin{array}{c}
\displaystyle \int_{0}^{T}\int_{\id}\left(\frac{\partial Q^{(M, N)}}{\partial t}+(u^{(M, N)}\cdot\nabla)Q^{(M, N)}-S(Q^{(M, N)}, \nabla u^{(M, N)})\right):\chi\,dxdt \vspace{2mm} \\ \displaystyle = \Gamma\int_{0}^{T}\int_{\id}\left(L\Delta Q^{(M, N)}-\theta\left\langle\frac{\partial\psi_{N}}{\partial Q}(Q^{(M, N)})\right\rangle+\kappa\,Q^{(M, N)}\right):\chi\,dxdt,
\end{array}
\end{displaymath}
for any $\chi\in L^{2}_{t}L^{2}_{x}$. Making the following choice of test function
\begin{equation*}
\chi=\left\langle\frac{\partial\psi_{N}}{\partial Q}(Q^{(M, N)})\right\rangle\varphi \quad \text{for any}\quad \varphi\in C^{\infty}_{c}(\id\times [0, T], \mathbb{R}_{+}), 
\end{equation*}
one may quickly deduce that the inequality
\begin{equation}
\frac{\partial}{\partial t}\left(\psi_{N}(Q^{(M, N)})\right)+(u^{(M, N)}\cdot\nabla)Q^{(M, N)}-\Gamma L\Delta \psi_{N}(Q^{(M, N)})\leq \frac{\Gamma\kappa^{2}}{2\theta}\tr\left[\left(Q^{(M, N)}\right)^{2}\right]
\end{equation}
holds pointwise in $\mathbb{R}$ for all $(x, t)\in\id\times (0, T)$. Using a suitable comparison function, we now demonstrate that it is possible to control $\psi_{N}(Q^{(M, N)})$ in $L^{\infty}$ \emph{uniformly in} $M$ once we have passed to the limit $N\rightarrow\infty$. This uniform control ultimately allows us to carry information on $\psi(Q^{(M)})$ through to the weak limit $\psi(Q)$.

Consider now the maps $G^{(M, N)}, H^{(M, N)}: \id\times (0, T)\rightarrow \mathbb{R}$, where $G^{(M, N)}$ solves the homogeneous problem
\begin{displaymath}
(\mathrm{P}_{N}^{1})\left\{
\begin{array}{c}
\displaystyle \frac{\partial G^{(M, N)}}{\partial t}+(u^{(M, N)}\cdot\nabla)G^{(M, N)}-\Gamma L\Delta G^{(M, N)}=0, \vspace{2mm} \\ \displaystyle G^{(M, N)}(\cdot, 0)=\psi_{N}(Q_{0})-\frac{1}{(\Lambda\pi)^{d}}\int_{\id}\psi_{N}(Q_{0})\,dx
\end{array}
\right.
\end{displaymath}
with \emph{mean zero} initial data, and $H^{(M, N)}$ solves the inhomogeneous problem
\begin{displaymath}
(\mathrm{P}_{N}^{2})\left\{
\begin{array}{c}
\displaystyle\frac{\partial H^{(M, N)}}{\partial t}+(u^{(M, N)}\cdot\nabla)H^{(M, N)}-\Gamma L\Delta H^{(M, N)}=\frac{\Gamma\kappa^{2}}{2\theta}\tr\left[\left(Q^{(M, N)}\right)^{2}\right] \vspace{2mm} \\\displaystyle H^{(M, N)}(\cdot, 0)=\frac{1}{(\Lambda\pi)^{d}}\int_{\id}\psi_{N}(Q_{0})\,dx
\end{array}
\right.
\end{displaymath}
with constant initial data. Both problems ($\mathrm{P}_{N}^{1}$) and ($\mathrm{P}_{N}^{2}$) are supplemented with periodic boundary conditions on $\id$. Defining the map $K^{(M, N)}$ to be the difference $\psi_{N}(Q^{(M, N)})-G^{(M, N)}-H^{(M, N)}$, one quickly sees that $K^{(M, N)}$ satisfies the parabolic inequality
\begin{equation}\label{gineq}
\frac{\partial K^{(M, N)}}{\partial t}+(u^{(M,N)}\cdot\nabla)K^{(M, N)}-\Gamma L\Delta K^{(M, N)}\leq 0,
\end{equation}
pointwise on $\id\times (0, T)$ with $K^{(M, N)}(\cdot, 0)=0$. By the classical parabolic maximum principle (see, for example, \textsc{Pucci and Serrin} \cite{pucci1}), we find that \eqref{gineq} above implies that
\begin{displaymath}
\begin{array}{c}
\displaystyle K^{(M, N)}(x, t)\leq 0 \quad \text{on} \quad \id\times(0, T) \vspace{2mm} \\ \displaystyle \Longleftrightarrow \quad \psi_{N}(Q^{(M, N)}(x, t))\leq G^{(M, N)}(x, t)+H^{(M, N)}(x, t) \quad \text{on}\quad \id\times (0, T).
\end{array}
\end{displaymath}
Thus, if we can obtain bounds independent of $M$ on $G^{(M, N)}$ and $H^{(M, N)}$ in $L^{\infty}$ in the limit as $N\rightarrow\infty$, we can infer strict physicality of both approximants $Q^{(M)}$ and, in turn, weak solutions $Q$ as $M\rightarrow\infty$. We now perform such an analysis on these two comparison functions.
\subsection{Analysis of the Comparison Function $G^{(M, N)}$}
By a standard construction, one can show that problem ($\mathrm{P}_{N}^{1}$) has a unique solution which is classically smooth for $t>0$. Although we know that the approximants $\psi_{N}(Q_{0})$ lie in $L^{2}$ by property \textbf{(M6)}, we only know $\psi(Q_{0})$ to be in $L^{1}$. Thus, to gain uniform control on $G^{(M, N)}$ in $L^{\infty}$ in the limit as $N\rightarrow\infty$, we require the following $L^{1}\rightarrow L^{\infty}$ estimate from \textsc{Constantin et Al.} \cite{constantin2}.
\begin{lem} Let $v$ be a smooth, spatially-periodic divergence-free velocity field $v$, and let $\gamma>0$. Suppose that $g$ evolves under the associated advection-diffusion equation on the two- or three-dimensional torus, namely
\begin{displaymath}
\left\{
\begin{array}{l}
\displaystyle\frac{\partial g}{\partial t}+(v\cdot \nabla)g-\Gamma L \Delta g=0, \vspace{2mm} \\ g(\cdot, 0)= g_{0}\in L^{1},
\end{array}
\right.
\end{displaymath}
where $g_{0}$ is of zero mean over $\id$. There exists a constant $C=C(\gamma)>0$ which is independent of $v$ such that $g$ satisfies
\begin{equation}\label{heat}
\|g(\cdot, t)\|_{\infty}\leq \frac{C(\gamma)}{t^{\frac{d}{2}+\gamma}}\|g_{0}\|_{1}
\end{equation}
for $t>0$.
\end{lem}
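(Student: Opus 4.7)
The plan is to exploit the fact that $v$ is divergence-free to reduce the estimate to its pure-heat-equation analogue, and then to conclude via a Nash-type ultracontractivity argument. First I would record two elementary consequences of incompressibility. Integrating the equation over $\id$ and using $\nabla\cdot v=0$ together with the periodic boundary conditions gives $\tfrac{d}{dt}\int_{\id}g\,dx=0$, so $g(\cdot,t)$ remains mean-zero for all $t\geq 0$; this is what will allow me to apply the Nash and Poincar\'e inequalities on the torus. For any $p\geq 2$, testing the equation against $|g|^{p-2}g$ and using
\[
\int_{\id}(v\cdot\nabla g)\,|g|^{p-2}g\,dx = \frac{1}{p}\int_{\id}v\cdot\nabla(|g|^{p})\,dx = 0
\]
yields the clean dissipation identity
\[
\frac{1}{p}\frac{d}{dt}\|g\|_{p}^{p} + \frac{4(p-1)}{p^{2}}\,\Gamma L\,\|\nabla|g|^{p/2}\|_{2}^{2} = 0,
\]
while a parallel calculation at $p=1$ (regularising $|g|$ by $\sqrt{g^{2}+\varepsilon}$) shows that $\|g(\cdot,t)\|_{1}\leq\|g_{0}\|_{1}$. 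Thus the drift $v$ is effectively invisible at the level of $L^{p}$ norms.

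With these preliminaries, I would carry out a two-step duality argument. At $p=2$, the Nash inequality for mean-zero functions on $\id$, $\|g\|_{2}^{2+4/d}\leq C\|\nabla g\|_{2}^{2}\|g\|_{1}^{4/d}$, combined with the $L^{1}$ monotonicity above, converts the dissipation identity into the scalar ODE
\[
\frac{d}{dt}\|g\|_{2}^{2}\leq -c\,\|g\|_{2}^{2+4/d}\,\|g_{0}\|_{1}^{-4/d},
\]
whose integration delivers the $L^{1}\to L^{2}$ smoothing bound $\|g(\cdot,t)\|_{2}\leq C\,t^{-d/4}\|g_{0}\|_{1}$. Since the formally adjoint equation $\partial_{s}\tilde g-(v\cdot\nabla)\tilde g-\Gamma L\Delta\tilde g=0$ (obtained by reversing time and flipping the sign of $v$) again has divergence-free drift, the very same argument applied to it produces by duality the $L^{2}\to L^{\infty}$ bound $\|g(\cdot,t)\|_{\infty}\leq C\,t^{-d/4}\|g_{0}\|_{2}$, with a constant uniform in $v$. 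Splitting the evolution at the midpoint $t/2$ and composing the two estimates then gives
\[
\|g(\cdot,t)\|_{\infty}\leq C\,(t/2)^{-d/4}\,\|g(\cdot,t/2)\|_{2}\leq C^{2}\,(t/2)^{-d/2}\,\|g_{0}\|_{1},
\]
which is the desired bound; the extra loss $\gamma>0$ is only needed to circumvent the sharp endpoint of the Nash constant in the borderline case $d=2$, and can be absorbed into $C(\gamma)$.

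The main obstacle is not any single analytic step but rather the careful verification that the constants appearing throughout the Nash ODE and its dual are independent of $v$. This uniformity, however, is built in as soon as incompressibility has been used to annihilate the advective contributions in every $L^{p}$-balance; the detailed book-keeping is carried out in \textsc{Constantin et al.} \cite{constantin2}, and ultimately reduces the proof to a textbook application of Nash's classical ultracontractivity method to the non-selfadjoint parabolic semigroup generated by $-v\cdot\nabla+\Gamma L\Delta$ on the torus.
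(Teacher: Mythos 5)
The paper does not prove this lemma at all: it is quoted verbatim and attributed to \textsc{Constantin et al.}\ \cite{constantin2}, which is where the full argument lives. So there is no in-paper proof for your sketch to diverge from; what you have done is outline a proof of the cited result, and it is worth checking whether your outline is sound.

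Your Nash ultracontractivity plan is essentially the right one, and the key structural observation is correct: because $\nabla\cdot v=0$, the drift vanishes from every $L^{p}$ energy balance, so the $L^{1}$-contraction, the $L^{2}$ dissipation identity, and the Nash/duality bootstrap all go through with constants that never see $v$. Two caveats. First, the lemma is applied in the paper with a \emph{time-dependent} drift $v=u^{(M,N)}(\cdot,t)$, so strictly speaking you should phrase the mid-point splitting using the two-parameter evolution family $S(t,s)$ rather than a semigroup; since all the $L^{p}$ bounds you use are pointwise in time this is cosmetic, but it deserves a sentence. Second, your explanation of the role of $\gamma>0$ is not right. There is no ``sharp endpoint'' issue with the Nash inequality in $d=2$ — Nash gives $\|g\|_{2}^{2+4/d}\leq C\|\nabla g\|_{2}^{2}\|g\|_{1}^{4/d}$ cleanly in every dimension — and your argument actually yields the \emph{sharper} rate $t^{-d/2}$, which for small $t$ implies the stated bound with room to spare. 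The place where $\gamma>0$ genuinely earns its keep is the regime $t\gtrsim 1$: there $t^{-d/2-\gamma}$ is \emph{smaller} than $t^{-d/2}$, so the Nash bound alone does not give the claim. To close that gap you must additionally invoke the Poincar\'e spectral gap on the torus for mean-zero data to get exponential (hence arbitrary polynomial) decay of $\|g(\cdot,t)\|_{2}$, and then feed that into the $L^{2}\to L^{\infty}$ smoothing. That supplement is missing from your write-up; once it is added, the sketch is a correct route to the estimate, and it matches the flavour of what \cite{constantin2} does.
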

Applying the estimate \eqref{heat} to the solution of ($\mathrm{P}_{N}^{1}$), one can show that 
\begin{equation*}
\left|G^{(M, N)}(x, t)\right|\leq \frac{C}{t^{\frac{d}{2}+\gamma}}\left\|\psi_{N}(Q_{0})-\frac{1}{(\Lambda\pi)^{d}}\int_{\id}\psi(Q_{0}(y))\,dy\right\|_{1},
\end{equation*}
for all $x\in\id$ and $t>0$, which together with property \textbf{(M3)} of the regularised potential yields
\begin{equation}\label{fone}
\left| G^{(M, N)}(x, t)\right|\leq \frac{C}{t^{\frac{d}{2}+\gamma}}\|\psi(Q_{0})\|_{1},
\end{equation}
where the resulting bound is clearly independent of both $M$ and $N$.
\subsection{Analysis of the Comparison Function $H^{(M, N)}$}\label{analcomp}
We now compare the smooth solutions of problem ($\mathrm{P}_{N}^{2}$) with those of the `limiting' problem
\begin{displaymath}
(\mathrm{P}^{2})\left\{
\begin{array}{c}
\displaystyle\frac{\partial H^{(M)}}{\partial t}+(u^{(M)}\cdot\nabla)H^{(M)}-\Gamma L\Delta H^{(M)}=\frac{\Gamma\kappa^{2}}{2\theta}\tr\left[\left(Q^{(M)}\right)^{2}\right], \vspace{2mm} \\ \displaystyle H^{(M)}(\cdot, 0)=\frac{1}{(\Lambda\pi)^{d}}\int_{\id}\psi(Q_{0})\,dx
\end{array}
\right.
\end{displaymath}
Since the initial datum is constant, $Q^{(M)}\in L^{\infty}_{t}H^{1}_{x}\cap L^{2}_{t}H^{2}_{x}$ with $Q^{(M)}(\cdot, t)\in L^{\infty}$, and the vector field $u^{(M)}$ is also smooth in space and time, it follows that $H^{(M)}\in L^{\infty}_{t}H^{3}_{x}\cap L^{2}_{t}H^{4}_{x}$. In particular, we have by the Sobolev embedding theorem that $H^{(M)}(\cdot, t)\in C^{2}_{\mathrm{per}}(\id)$ for $0\leq t<T$. If we denote the difference $H^{(M, N)}-H^{(M)}$ by $\overline{H}^{(M, N)}$, it can be checked that it satisfies the equality
\begin{displaymath}
\begin{array}{c}
\displaystyle \frac{\partial \overline{H}^{(M, N)}}{\partial t}+(u^{(M)}\cdot \nabla)\overline{H}^{(M, N)}-\Gamma L\Delta \overline{H}^{(M, N)} = \frac{\Gamma\kappa^{2}}{2\theta}\tr\left[\left(Q^{(M, N)}\right)^{2}-\left(Q^{(M)}\right)^{2}\right] \vspace{2mm} \\ \displaystyle -\left((u^{(M, N)}-u^{(M)})\cdot \nabla\right)H^{(M, N)}
\end{array}
\end{displaymath}
in $L^{2}$ for each $t>0$. Multiplying throughout by $\overline{H}^{(M, N)}$ and integrating over the spatial domain $\id$, one may use the fact that $\{H^{(M, N)}\}_{N=1}^{\infty}$ is uniformly bounded in $L^{\infty}_{t}L^{2}_{x}$ to derive the inequality
\begin{displaymath}
\begin{array}{c}
\displaystyle \frac{1}{2}\frac{d}{dt}\|\overline{H}^{(M, N)}(\cdot, t)\|_{2}^{2}+\Gamma L \|\nabla\overline{H}^{(M, N)}(\cdot, t)\|_{2}^{2}\leq C\|Q^{(M, N)}(\cdot, t)-Q^{(M)}(\cdot, t)\|_{4}^{2}\vspace{2mm} \\ \displaystyle + C\|\overline{H}^{(M, N)}(\cdot, t)\|_{2}^{2}+ C\left(\max_{[0, T]}\|u^{(M, N)}(\cdot, s)-u^{(M)}(\cdot, s)\|_{\infty}\right)\|\nabla H^{(M)}(\cdot, t)\|_{2},
\end{array}
\end{displaymath} 
which holds for $0<t<T$. Finally, an application of Gronwall's inequality along with the results of section \ref{strengthen} yield the convergence result
\begin{equation*}
\lim_{N\rightarrow \infty}\|H^{(M, N)}(\cdot, t)-H^{(M)}(\cdot, t)\|_{2}=0,
\end{equation*}
from which we deduce that
\begin{equation}\label{ftwo}
H^{(M, N)}(x, t)\rightarrow H^{(M)}(x, t) \quad \text{almost everywhere on} \quad \id\times (0, T) \quad \text{as} \quad N\rightarrow \infty.
\end{equation}
At this point, we can say the following. Since $\psi_{M}(Q^{(M, N)}(x, t))\leq G^{(M, N)}(x, t)+H^{(M, N)}(x, t)$, using property \textbf{(M4)} of the regularised potential and the fact that $Q^{(M, N)}(x, t)\rightarrow Q^{(M)}(x, t)\in\mathsf{D}(\psi)$ almost everywhere as $N\rightarrow\infty$, we may deduce that
\begin{displaymath}
\begin{array}{c}
\displaystyle \psi_{N}(Q^{(M, N)}(x, t))\leq \frac{C}{t^{\frac{d}{2}+\gamma}}\|\psi(Q_{0})\|_{1}+H^{(M, N)}(x, t) \vspace{2mm} \\ \displaystyle \Longrightarrow \psi(Q^{(M)}(x, t)) = \lim_{N\rightarrow\infty}\psi_{N}(Q^{(M, N)}(x, t))\leq \frac{C}{t^{\frac{d}{2}+\gamma}}\|\psi(Q_{0})\|_{1}+H^{(M)}(x, t)
\end{array}
\end{displaymath}
for almost every $(x, t)\in \id\times (0, T)$ by results \eqref{fone} and \eqref{ftwo} above. Therefore, if we can find a bound on $H^{(M)}$ in $L^{\infty}$ which is independent of $M$, we have demonstrated our earlier claim that $\psi(Q^{(M)}(\cdot, t))$ may be controlled in $L^{\infty}$ uniformly in $M$.
\subsection{Uniform Bounds in $L^{\infty}$ on $H^{(M)}$}
As solutions of problem ($\mathrm{P}^{2}$) are sufficiently regular, multiplying throughout the equation by $H^{(M)}((H^{(M)})^{2})^{p/2-1}$ for $p>2$ and integrating over $\id$, we find
\begin{displaymath}
\begin{array}{c}
\displaystyle \int_{\id}\left(\frac{1}{p}\frac{\partial}{\partial t}\left(H^{(M)}\right)^{p}\right)+\frac{1}{p}u^{(M)}_{k}\frac{\partial}{\partial x_{k}}\left(\left(H^{M})^{p}\right)-\Gamma L \Delta H^{(M)}H^{(M)}(\left(H^{(M)}\right)^{2})^{\frac{p}{2}-1}\right)\,dx\vspace{2mm} \\ \displaystyle \leq \frac{\Gamma\kappa^{2}}{2\theta}\int_{\id}\tr\left[\left(Q^{(M)}\right)^{2}\right] H^{(M)}(\left(H^{(M)}\right)^{2})^{\frac{p}{2}-1}\,dx,
\end{array}
\end{displaymath}
whence
\begin{displaymath}
\begin{array}{c}
\displaystyle \frac{1}{p}\frac{d}{dt}\|H^{(M)}(\cdot, t)\|_{p}^{p} \leq \left(\frac{\Gamma\kappa^{2}}{2\theta}\right)^{p}\frac{1}{p}\int_{\id}\left(\tr\left[\left(Q^{(M)}\right)^{2}\right]\right)^{p}\,dx+\left(1-\frac{1}{p}\right)\|H^{(M)}(\cdot, t)\|_{p}^{p}\vspace{3mm} \\ \displaystyle \Longrightarrow \frac{1}{p}\frac{d}{dt}\|H^{(M)}(\cdot, t)\|_{p}^{p}\leq \left(\frac{\Gamma\kappa^{2}}{2\theta}\right)^{p}\frac{(\Lambda\pi)^{d}}{p}+\left(1-\frac{1}{p}\right)\|H^{(M)}(\cdot, t)\|_{p}^{p} \vspace{3mm} \\ \displaystyle \Longrightarrow \|H^{(M)}(\cdot, t)\|_{p}\leq 2^{\frac{1}{p}}\left(\left|\int_{\id}\psi(Q_{0})\,dx\right|e^{(1-1/p)T}+\left(\frac{\Gamma\kappa^{2}}{2\theta}\right)\frac{e^{(1-1/p)T}}{(p-1)^{\frac{1}{p}}}\right).
\end{array}
\end{displaymath}
Finally, using the fact that $H^{(M)}$ lies in $L^{p}$ for \emph{every} $p\in [1, \infty)\cup\{\infty\}$, by taking the limit $p\rightarrow\infty$ in the above inequality, we deduce that solutions of ($\mathrm{P}^{2}$) satisfy
\begin{equation}
\|H^{(M)}(\cdot, t)\|_{\infty}\leq \left|\int_{\id}\psi(Q_{0})\,dx\right|e^{T}+\left(\frac{\Gamma\kappa^{2}}{2\theta}\right)e^{T},
\end{equation}
and so the sequence $\{H^{(M)}\}_{M=1}^{\infty}$ is indeed uniformly bounded in $L^{\infty}_{t}L^{\infty}_{x}$ as claimed.

Piecing together the remarks of this stage, we conclude that
\begin{equation}\label{yesest}
\psi(Q^{(M)}(x, t)) \leq \frac{C}{t^{\frac{d}{2}+\gamma}}\|\psi(Q_{0})\|_{1}+\left|\int_{\id}\psi(Q_{0})\,dx\right|e^{T}+\left(\frac{\Gamma\kappa^{2}}{2\theta}\right)e^{T},
\end{equation}
whence $Q^{(M)}(\cdot, t)$ is strictly physical for $t>0$. Such a property will automatically be inherited by weak solutions $Q$ if we can show $Q^{(M)}(x, t)\rightarrow Q(x, t)$ almost everywhere as $M\rightarrow \infty$. 
\begin{rem}
Let us comment briefly on the non co-rotational case $\xi\in\mathbb{R}\setminus\{0\}$. If one follows through the scheme of Stage III, one finds that the relevant comparison function $H$ in Section \ref{analcomp} should satisfy
\begin{displaymath}
\left\{
\begin{array}{c}
\displaystyle \frac{\partial H}{\partial t}+(u^{(M)}\cdot\nabla)H-\Gamma L \Delta H = c_{0}(\xi)\tr\left[(\nabla u^{(M)})^{2}\right] + \frac{\Gamma\kappa^{2}}{2\theta}\tr\left[\left(Q^{(M)}\right)^{2}\right] \vspace{2mm} \\ \displaystyle H(\cdot, 0)= \frac{1}{(\Lambda \pi)^{d}}\int_{\id}\psi(Q_{0})\,dx,
\end{array}
\right.
\end{displaymath} 
where $c_{0}(0)=0$. Our construction of weak solutions does not provide us with $W^{1, 4}_{\mathrm{div}}$-bounds on approximants $u^{(M)}$ \emph{uniform} in $M$ which we need to carry out the program outlined above. It is for this reason we restrict our study of strict physicality of weak solutions to the case $\xi=0$. 
\end{rem}

\section*{Stage IV: Passing to the Limit $M\rightarrow\infty$}
At this point, compared with Stage II we have much less to do in order to identify candidate maps for weak solutions. Since we have shown in the previous stages that $Q^{(M)}(x, t)\in\mathsf{D}(\psi)$ almost everywhere, it follows that $Q^{(M)}\in L^{\infty}_{t}L^{\infty}_{x}$ and so is automatically in $L^{\infty}_{t}L^{2}_{x}$. Furthermore, our maps $Q^{(M)}$ and $u^{(M)}$ are sufficiently regular that the identity \eqref{lya} holds. It is also now straightforward to verify that
\begin{equation}\label{sign}
\int_{\id}\Delta Q^{(M)}:\frac{\partial\psi}{\partial Q}(Q^{(M)})\,dx
\end{equation}
is of positive sign. Since we know the image of the maps $Q^{(M)}$ belong to a compact subset of $\mathsf{D}(\psi)$ which is independent of $M$, we need not worry about distributional differentiability of $\partial_{Q}\psi (Q^{(M)})$. Once again, the reasoning of Section \ref{secthree} allows us to infer that
\begin{equation}\label{us}
\{Q^{(M)}\}_{M=1}^{\infty}\quad \text{is uniformly bounded in} \quad L^{\infty}_{t}H^{1}_{x}\cap L^{2}_{t}H^{2}_{x},
\end{equation}
\begin{equation}\label{we}
\{u^{(M)}\}_{M=1}^{\infty} \quad \text{is uniformly bounded in} \quad L^{\infty}_{t}L^{2}_{\mathrm{div}}\cap L^{2}_{t}H^{1}_{\mathrm{div}}
\end{equation}
and also
\begin{equation}\label{them}
\left\{\frac{\partial\psi}{\partial Q}(Q^{(M)})\right\}_{M=1}^{\infty} \quad \text{is uniformly bounded in}\quad L^{2}_{t}L^{2}_{x}.
\end{equation}
We shall denote the weak limit points of the sequences \eqref{we}, \eqref{you} and \eqref{them} by $Q$, $u$ and $Y$, respectively. 

Using the fact that the approximate tensor field equation holds in the strong sense in $L^{2}_{t}L^{2}_{x}$, we may verify that
\begin{equation}\label{meme}
\left\{\frac{\partial Q^{(M)}}{\partial t}\right\}_{M=1}^{\infty} \quad \text{is uniformly bounded in} \quad L^{2}_{t}H^{p(d)}_{x},
\end{equation}
where $p(2)=0$ and $p(3)=-1$. Similarly, considering the distributional form of the approximate velocity field equation, one may in turn check that
\begin{equation}\label{youyou}
\left\{\frac{\partial u^{(M)}}{\partial t}\right\}_{M=1}^{\infty} \quad \text{is uniformly bounded in} \quad L^{4/3}_{t}H^{-1}_{\mathrm{div}}.
\end{equation}
To strengthen the convergence of the approximants $Q^{(M)}$ and $u^{(M)}$ to their weak limits, we need only apply once again the Aubin-Lions compactness lemma.

%With this information in hand, we may recover some temporal continuity properties of the maps $Q$ and $u$. In the case where we work on the 2-torus, we may use Theorem 7.2 of \textsc{Robinson} \cite{robinson1} to deduce that $Q\in C([0, T]; H^{1})$ and $u\in C([0, T]; L^{2})$. In the case of the 3-torus, we must weaken this statement. In this instance, we can only say $Q\in C([0, T]; L^{2})$, as Ladyzhenskaya's inequality in dimension three does not allow us to improve the bound in \eqref{meme}.
%\subsection{Passing to Weak Solutions}
Finally, using the facts that $Q^{(M)}\rightarrow Q$ in $L^{2}_{t}H^{1}_{x}$, $u^{(M)}\rightarrow u$ in $L^{2}_{t}L^{2}_{\mathrm{div}}$ and that both $Q^{(M)}(x, t)$ and $Q(x, t)$ belong to a compact subset of $\mathsf{D}(\psi)$ almost everywhere, passing to the limit in system ($\mathrm{S}_{M}$) we can show that the system
\begin{displaymath}
\left\{
\begin{array}{c}
\displaystyle \partial_{t}Q+(u\cdot\nabla)Q-S(Q, \nabla u)=\Gamma\left(L\Delta Q-\theta\left\langle\frac{\partial \psi}{\partial Q}(Q)\right\rangle +\kappa\,Q\right), \vspace{2mm} \\ \displaystyle \partial_{t}u+(u\cdot \nabla)u+\nabla p=\nu \Delta u +\mathrm{div}\left(\tau +\sigma\right), \vspace{2mm} \\ \nabla \cdot u = 0
\end{array}
\right.
\end{displaymath}
is satisfied distributionally by $Q\in L^{\infty}_{t}H^{1}_{x}\cap L^{2}_{t}H^{2}_{x}$ and $u\in L^{\infty}_{t}L^{2}_{\mathrm{div}}\cap L^{2}_{t}H^{1}_{\mathrm{div}}$. 

This completes the construction of weak solutions and, in turn, closes the proof of \textsc{Theorem} \ref{theoremone}.

\section{Higher Regularity of Weak Solutions in Dimension 2}\label{higherreg}

As intimated in Section \ref{highreg}, if one places higher regularity conditions on the initial data for the system (S), strict physicality enables one to prove that weak solutions of \textsc{Theorem} \ref{theoremone} are in turn more regular. Furthermore, higher regularity of solutions allows one to infer that the limiting $Q$ equation holds in the strong sense as an equality in $L^{2}_{t}L^{2}_{x}$.

We begin our enquiries at the end of Stage III, under the additional assumption that $u_{0}\in H^{1}_{\mathrm{div}}$ and $Q_{0}\in H^{2}$ with $\psi(Q_{0})\in L^{\infty}$. In particular, the reasoning of this stage (in particular, estimate \eqref{yesest}) implies that $\psi(Q^{(M)}(\cdot, t))\in L^{\infty}$ for $t\geq 0$. Moreover, by comparing approximants $Q^{(M)}$ with strong solutions $R^{(M)}$ of the problem
\begin{displaymath}
\left\{
\begin{array}{ll}
& \displaystyle \frac{\partial R^{(M)}}{\partial t} - \Gamma L \Delta R^{(M)}=-(u^{(M)}\cdot\nabla)Q^{(M)}+S(Q^{(M)}, \nabla u^{(M)})\vspace{2mm}\\
- & \displaystyle \Gamma\left(\theta\left\langle\frac{\partial\psi}{\partial Q}(Q^{(M)})\right\rangle-\kappa\,Q^{(M)}\right), \vspace{2mm}\\
& R^{(M)}(\cdot, 0)=Q_{0},
\end{array}
\right.
\end{displaymath}
one may deduce by uniqueness that $Q^{(M)}\in L^{\infty}_{t}H^{2}_{x}\cap L^{2}_{t}H^{3}_{x}$ for each $M\geq 1$.

With this in mind, consider the `higher-order' energy
\begin{equation*}
\mathcal{F}^{(M)}(t):=\mathcal{F}(Q^{(M)}(\cdot, t), u^{(M)}(\cdot, t))=\frac{1}{2}\int_{\id}|\nabla u^{(M)}(x, t)|^{2}\,dx+\frac{L}{2}\int_{\id}|\Delta Q^{(M)}(x, t)|^{2}\,dx.
\end{equation*}
Using the identity
\begin{displaymath}
\begin{array}{c}
\displaystyle L\int_{\id}\Delta u^{(M)}_{i}\frac{\partial}{\partial x_{j}}\left(\frac{\partial Q^{(M)}_{mn}}{\partial x_{i}}\frac{\partial Q^{(M)}_{nm}}{\partial x_{j}}\right)\,dx-L\int_{\id}\Delta Q^{(M)}: \Delta\left((u^{(M)}\cdot\nabla)Q^{(M)}\right)\,dx \vspace{2mm}\\ \displaystyle = 2L\int_{\id}\frac{\partial}{\partial x_{k}}\left(\Delta Q^{(M)}\right):(u^{(M)}\cdot\nabla)\frac{\partial Q^{(M)}}{\partial x_{k}}\,dx, 
\end{array}
\end{displaymath}
one can show that, at this level of regularity, the following energy identity holds:
\begin{displaymath}
\begin{array}{c}
\displaystyle \frac{d\mathcal{F}^{(M)}}{dt}+\nu \int_{\id}|\Delta u^{(M)}|^{2}\,dx+\Gamma L^{2}\int_{\id}|\nabla \Delta Q^{(M)}|^{2}\, dx \vspace{2mm} \\ \displaystyle = \underbrace{\int_{\id}(u^{(M)}\cdot\nabla)u^{(M)}\cdot\Delta u^{(M)}\,dx}_{\mathcal{K}_{1}:=} + \Gamma L\kappa\int_{\id}|\nabla Q^{(M)}|^{2}\,dx \vspace{2mm} \\ \displaystyle +\underbrace{2L\int_{\id}(u^{(M)}\cdot\nabla)\frac{\partial}{\partial x_{k}}Q^{(M)}:\frac{\partial}{\partial x_{k}}\Delta Q^{(M)}\,dx}_{\mathcal{K}_{2}:=}-\Gamma L\theta\int_{\id}\Delta Q^{(M)}:\Delta \left(\left\langle \frac{\partial\psi}{\partial Q}(Q^{(M)})\right\rangle\right)\,dx.
\end{array}
\end{displaymath}
By means of Ladyzhenskaya's inequality and the uniform bounds \eqref{us} and \eqref{we}, one may derive the estimate
\begin{equation*}
\mathcal{K}_{1}\leq\frac{\nu}{2}\|\Delta u^{(M)}\|_{2}^{2}+C\left(\|\nabla u^{(M)}\|_{2}^{4}+1\right)
\end{equation*}
and also
\begin{equation*}
\mathcal{K}_{2}\leq \frac{\Gamma L^{2}}{2}\|\nabla \Delta Q^{(M)}\|_{2}^{2}+ C\left(\|\nabla u^{(M)}\|_{2}^{4}+\|\Delta Q^{(M)}\|_{2}^{4}\right),
\end{equation*}
for constants $C>0$ which are independent of $M$. Furthermore, since strict physicality implies the existence of a compact subset $K\subset\symn$ such that $Q^{(M)}(x, t)\in K$ for {\em all} $M\geq 1$ and a.e. $(x, t)\in I^{2}\times [0, T]$, one has by property \textbf{(P1)} of $\psi$ that
\begin{equation*}
\left|\frac{\partial^{k}\psi}{\partial Q_{i_{1}j_{1}}...\partial Q_{i_{k}j_{k}}}(Q^{(M)}(x, t))\right|\leq C_{k} \quad \text{for} \hspace{2mm} k\in\{1, 2, 3\},
\end{equation*}
for some constant independent of $M$. Such information leads one to deduce that
\begin{equation}\label{gronny}
\frac{d\mathcal{F}^{(M)}}{dt}(t)\leq C_{0}\left(\mathcal{F}^{(M)}(t)\right)^{2}+C_{1}
\end{equation}
for $M$-independent constants $C_{i}>0$. Multiplying throughout \eqref{gronny} by the integration factor
\begin{equation*}
\exp\left(-C_{0}\int_{0}^{t}\mathcal{F}^{(M)}(s)\,ds\right),
\end{equation*} 
acknowledging left-continuity of $\mathcal{F}^{(M)}$ at 0 and using the fact that $\mathcal{F}^{(M)}$ is uniformly bounded in $L^{1}(0, T)$ for any $T>0$, an application of Gronwall's inequality allows one to deduce that
\begin{equation}
\left\{Q^{(M)}\right\}_{M=1}^{\infty} \quad \text{is uniformly bounded in} \quad L^{\infty}_{t}H^{2}_{x}\cap L^{2}_{t}H^{3}_{x},
\end{equation}
and 
\begin{equation}
\left\{u^{(M)}\right\}_{M=1}^{\infty} \quad \text{is uniformly bounded in} \quad L^{\infty}_{t}H^{1}_{\mathrm{div}}\cap L^{2}_{t}H^{2}_{\mathrm{div}}.
\end{equation} 
%At this point, we cannot extend $T_{0}>0$ as the Gronwall estimate (which is by no means guaranteed to be sharp) leads to bound which blows up in finite time. 

Such uniform bounds give us more with which to work as we pass to limiting weak solutions of system (S). Aubin-Lions compactness guarantees us the existence of (relabeled) subsequences of $\{Q^{(M)}\}_{M=1}^{\infty}$ and $\{u^{(M)}\}_{M=1}^{\infty}$ which are strongly convergent in $L^{2}_{t}H^{2}_{x}$ and $L^{2}_{t}H^{1}_{\mathrm{div}}$, respectively. In particular, one may show that $\{\partial_{t}Q^{(M)}\}_{M=1}^{\infty}$ is uniformly bounded in $L^{2}_{t}L^{2}_{x}$, and further that the limiting $Q$ equation holds in the strong sense as an equality in $L^{2}_{t}L^{2}_{x}$.

%Also, we can strengthen the notion of continuity.

%Extension of the map $Q$ to all time using the strong equality.

\subsubsection*{Acknowledgements}
I extend my thanks to both the University of Sussex in Brighton, at which I was a visiting Research Fellow, and also the Hausdorff Institute in Bonn, at which I was a visitor, for their excellent hospitality whilst this work was being completed. Finally, I would like to thank Arghir Zarnescu and John Ball for interesting discussions related to the material in this paper.

%\appendix
%\section{Properties of the Ball-Majumdar Potential}\label{dude}
%Hello.

\bibliography{physic}
\end{document}